\documentclass[12pt]{amsart}

\usepackage{amsmath,amssymb,amsthm,mathtools,mathrsfs}
\usepackage[hidelinks]{hyperref}

\hypersetup{
  pdftitle={Maximal Unramified p-Extensions with Prescribed Galois Groups: A Quantitative Refinement of Ozaki's Theorem},
  pdfauthor={Kwang-Seob Kim},
  pdfkeywords={p-class field tower, unramified extension, Frattini subgroup,
  degree bound, ray class field, Tate sequence, inverse Galois problem}
}

\calclayout
\numberwithin{equation}{section}

\newtheorem{theorem}{Theorem}[section]
\newtheorem{proposition}[theorem]{Proposition}
\newtheorem{lemma}[theorem]{Lemma}
\newtheorem{corollary}[theorem]{Corollary}

\theoremstyle{definition}

\theoremstyle{remark}
\newtheorem{remark}[theorem]{Remark}

\newcommand{\Q}{\mathbb Q}
\newcommand{\Z}{\mathbb Z}
\newcommand{\Fp}{\mathbb F_p}
\newcommand{\Gal}{\operatorname{Gal}}
\newcommand{\Cl}{\operatorname{Cl}}
\newcommand{\Frob}{\operatorname{Frob}}
\newcommand{\Nm}{\operatorname{N}}

\newcommand{\Hom}{\operatorname{Hom}}

\newcommand{\cO}{\mathcal O}
\newcommand{\cC}{\mathscr C}
\newcommand{\cJ}{\mathscr J}
\newcommand{\cM}{\mathfrak m}
\newcommand{\Lp}{L_p}
\newcommand{\PhiG}{\Phi}

\title[A quantitative refinement of Ozaki's theorem]
{Maximal Unramified $p$-Extensions with Prescribed Galois Groups:\\
A Quantitative Refinement of Ozaki's Theorem}
\author{Kwang-Seob Kim}
\address{Department of Mathematics, Chosun University, Gwangju 61452,
South Korea}
\email{kwang12@chosun.ac.kr}
\subjclass[2020]{Primary 11R32; Secondary 11R29, 20D15, 20J06}
\keywords{$p$-class field tower, unramified extension, Frattini subgroup,
degree bound, ray class field, Tate sequence, inverse Galois problem}

\begin{document}

\begin{abstract}
Ozaki proved that every finite $p$-group occurs as the Galois group of
a maximal unramified $p$-extension of a number field.  Hajir, Maire and
Ramakrishna made this theorem effective, obtaining a base-field degree
of order $|G|$.  In this article, we reduce that degree by taking the
Frattini structure of $G$ into account.  More precisely, for an odd
prime $p$ and a finite $p$-group $G$ of order $p^n$, we prove
\[
 \tau_p(G)
 \leq
 p^{\ell_\Phi(G)+
 \left\lceil\log_p\left(\binom{n+2}{2}+1\right)\right\rceil}.
\]
Here $\tau_p(G)$ is the least degree of a number field realizing $G$
as its $p$-class tower group, and $\ell_\Phi(G)$ is the iterated
Frattini length of $G$.  Thus, for groups of bounded Frattini length, the
order-scale bound $p^n$ is replaced by the quadratic bound $O_p(n^2)$.
For $E_m=(\Z/p\Z)^m$, we further prove the sharp estimate
$\tau_p(E_m)\asymp_p m^2$.

\end{abstract}

\maketitle

\section{Introduction}\label{sec:introduction}
The existence of unramified extensions with prescribed Galois group has been studied by many authors.  Fr\"ohlich showed that every finite group occurs in this way \cite{Frohlich}, but the resulting base field generally has very large degree.  A series of works on alternating groups obtained unramified extensions over quadratic fields \cite{EGM,Kedlaya,Kondo,Uchida,Yamamoto}. For a finite $p$-group $G$, Nomura constructed an unramified extension $M/F$ with $\Gal(M/F)\simeq G$ and $F/\Q$ elementary abelian \cite{Nomura}.  This substantially reduces the degree, but does not require $M$ to be the maximal unramified $p$-extension of $F$.

Let $\Lp(F)$ denote the maximal unramified pro-$p$ extension of $F$. Ozaki proved that every finite $p$-group $G$ can be realized as $\Gal(\Lp(F)/F)$ \cite{Ozaki}.  Thus maximality is retained, although the original theorem is qualitative and produces fields of very large degree.  Hajir, Maire and Ramakrishna made Ozaki's theorem effective \cite{HMR-Ozaki}: if $p\nmid h(k_0)$ and $\mu_p\not\subset k_0$, they construct infinitely many $F/k_0$ such that
\[
 \Gal(\Lp(F)/F)\simeq G,
 \qquad
 [F:k_0]\leq p^2|G|.
\]
Taking $k_0=\Q$ gives a bound of order $|G|$.  The purpose of this article is to retain Ozaki's maximality condition while reducing this degree by using the Frattini structure of $G$.\\\\
Define
\[
 \tau_p(G)=
 \min\left\{
 [F:\Q]:\Gal(\Lp(F)/F)\simeq G
 \right\}.
\]
For a finite $p$-group $G$, put $\PhiG^0(G)=G$ and
\[
 \PhiG^{i+1}(G)=\PhiG(\PhiG^i(G)),
 \qquad
 \PhiG(H)=H^p[H,H].
\]
The Frattini length of $G$ is
\[
 \ell_\Phi(G)=\min\{\ell:\PhiG^\ell(G)=1\}.
\]
Our main result is the following.

\begin{theorem}\label{thm:main}
Let $p$ be an odd prime and let $G$ be a finite $p$-group of order
$p^n>1$.  Then
\begin{equation}\label{eq:main-bound}
 \tau_p(G)
 \leq
 p^{\ell_\Phi(G)+
 \left\lceil\log_p\left(\binom{n+2}{2}+1\right)\right\rceil}.
\end{equation}
In particular,
\begin{equation}\label{eq:min-bound}
 \tau_p(G)\ll_p
 \min\left\{
 |G|,
 p^{\ell_\Phi(G)}(1+\log_p|G|)^2
 \right\}.
\end{equation}
\end{theorem}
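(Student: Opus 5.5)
The plan is to run Ozaki's inductive construction, in the effective form of Hajir--Maire--Ramakrishna, but along the Frattini filtration $G=\PhiG^0(G)\supset \PhiG^1(G)\supset\cdots\supset\PhiG^{\ell}(G)=1$ with $\ell=\ell_\Phi(G)$, rather than along a composition series of length $n$. Each step of the filtration is a Frattini extension of the current quotient $G_i=G/\PhiG^{i}(G)$ by the elementary abelian $p$-group $\PhiG^{i-1}(G)/\PhiG^{i}(G)$ (more precisely, $G_{i+1}\to G_i$ has kernel $\PhiG^{i}(G)/\PhiG^{i+1}(G)$, which lies in $\PhiG(G_{i+1})$). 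Because the extension is Frattini, a $G_{i+1}$-extension of the base is automatically unramified once $G_i$ is realised unramified and the lifting is arranged at the tame primes. Therefore the number of generators, and hence the $p$-rank of the class group, does not grow. I would therefore aim to spend only one cyclic degree-$p$ base extension per Frattini layer, which produces the factor $p^{\ell}$. On top of that there is one preliminary base field of degree $p^{c}$ with $c=\lceil\log_p(\binom{n+2}{2}+1)\rceil$, whose job is to supply enough independent units and primes to kill every unwanted cohomology class.

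The steps, in order, are as follows. (1) Choose $k_0/\Q$ cyclic of degree $p^{c}$ with $p\nmid h(k_0)$ and $\mu_p\not\subset k_0$, and with unit rank $r_{k_0}\ge\binom{n+2}{2}$. This is possible by taking $k_0$ totally real of degree $p^c>\binom{n+2}{2}$, which has unit rank $p^c-1\ge\binom{n+2}{2}$; the point is that $\dim H^2(G,\Fp)\le\binom{n+1}{2}$-type bounds, together with $d(G)\le n$, show that the total number of relations to be controlled is at most $\binom{n+2}{2}$. (2) Inductively, given $F_i/k_0$ of degree $p^{i}$ with $\Gal(\Lp(F_i)/F_i)\simeq G_i$, use Chebotarev, applied to the governing field cut out by the units and the $p$-th roots of unity, to pick tame primes $q$ with prescribed Frobenius. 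Then take $F_{i+1}\subset k_0(\zeta_q)$-type cyclic degree-$p$ extensions so that the ray class group with the Tate sequence surjects onto the Schur-multiplier obstruction, which forces the tower group to become exactly $G_{i+1}$. (3) At the end $[F_\ell:\Q]=p^{c+\ell}$, which gives \eqref{eq:main-bound}. Finally, \eqref{eq:min-bound} follows by combining this with the HMR bound $p^2|G|$ over $k_0=\Q$, since $\binom{n+2}{2}+1\ll n^2$ implies $p^{c}\ll_p n^2$.

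The main obstacle is step (2): proving that a single degree-$p$ extension suffices per layer even though a layer may have dimension up to $n$. This requires showing that genus theory in the cyclic extension adds exactly the right elementary abelian quotient, with the relations cut out by the Tate-sequence computation of $\Cl$ as a Galois module. My first attempt would use many ramified primes $q_1,\dots,q_s$ in the same cyclic degree-$p$ extension, with $s$ equal to the layer dimension. I would bound the added relation rank using the units of $k_0$; this is exactly where the quadratic count of units, $\binom{n+2}{2}$, is consumed.
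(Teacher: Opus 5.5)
Your outer skeleton agrees with the paper's: a totally real cyclic base $k_0$ of degree $p^c$ with $p\nmid h(k_0)$, induction along $Q_i=G/\PhiG^i(G)$, one cyclic degree-$p$ step per layer, the degree count $p^{c+\ell}$, and the HMR bound for \eqref{eq:min-bound}. However, step (2) is the entire theorem, and, as you acknowledge yourself, you give no argument for it. Several things you do say there are also off.
\begin{itemize}
\item The Frattini property plays no role. A Frattini lift is not ``automatically unramified''. The paper's layer construction works for any elementary abelian kernel $A$ with any $Q_i$-action; the Frattini series is used only because its layers are elementary abelian and there are $\ell_\Phi(G)$ of them.
\item Genus theory of $F_{i+1}/F_i$ alone only sees unramified extensions that are abelian over $F_i$. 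So it cannot produce a nonabelian $Q_{i+1}$, or a layer with nontrivial action.
\item A degree-$p$ field ``inside $k_0(\zeta_q)$'' need not contain $F_i$ at all.
\end{itemize}

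Three concrete ideas are missing.
\begin{itemize}
\item \emph{How one degree-$p$ extension absorbs a layer of dimension $a$.} The paper works over the top of the current tower, $K=\Lp(k_i)$. It builds a $p$-ray class field $M/K$ with $\Gal(M/K)\simeq B=A\oplus\Fp e$ and $\Gal(M/k_i)\simeq\Gamma\times C_p$, where $\Gamma=Q_{i+1}$, and puts $D=M^{\Gamma\times1}$. Every inertia line $\Fp(gc_j)$ has $e$-coordinate $1$, so it meets $\Gamma$ trivially. Hence $M/D$ is unramified with group $\Gamma$.
\item \emph{Getting the right group, not just the right kernel module.} The extension class must equal the prescribed $\alpha\in H^2(Q,B)\simeq H^3(Q,W)$, where $W=\ker(R^t\to B)$. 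This needs four inputs:
  \begin{itemize}
  \item the Tate sequence, which gives surjectivity of $f\mapsto f_*\overline\theta_{K/k}$ from $\Hom_R(U,W)$ onto $H^3(Q,W)$;
  \item the free summand $R^\lambda$ of the units, used to make $f$ surjective without changing the class;
  \item equivariant Chebotarev, to realize the unit residue map as $i\circ f$;
  \item Shafarevich--Weil, to identify the class of $\Gal(M/k_i)$.
  \end{itemize}
  Your ``Schur-multiplier obstruction'' is not this object.
\item \emph{Stopping the tower.} One needs $p\nmid h(M)$. The paper adds $\binom{b}{2}$ geometry primes whose decomposition groups in $B$ are the planes $\langle c_i,c_j\rangle$, arranged through controller primes and generators $\pi_{ij}$ of $\mathfrak v_{ij}^h$. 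These planes span $\bigwedge^2B$. Then any unramified central $C_p$-extension of $M$ over $K$ is abelian over $K$ with conductor dividing the modulus, hence lies inside $M$.
\end{itemize}

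This last step is the real source of the quadratic count $t=\binom{a+2}{2}$. The budget must be checked layer by layer against the free $\Fp[Q_i]$-rank of the units of $K$,
$\lambda_i=[k_i:\Q]-1+h^1(Q_i)-h^2(Q_i)$,
not against the unit rank of $k_0$. The check is $1+\binom{a+2}{2}+ad+h^2(Q_i)\le\binom{n+2}{2}+1$, using $a+q\le n$.
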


Writing $|G|=p^n$, the earlier effective bound is $O_p(p^n)$, whereas
Theorem~\ref{thm:main} gives
\[
 O_p\bigl(p^{\ell_\Phi(G)}(1+n)^2\bigr).
\]
Hence bounded Frattini length changes an exponential bound into a
quadratic one.  For example, $\tau_p(H^r)\ll_{p,H}r^2$ for fixed $H$.

For elementary abelian groups the quadratic scale is sharp:
Theorem~\ref{thm:elementary} proves
$\tau_p((\Z/p\Z)^m)\asymp_p m^2$.\\

The paper is organized as follows.  Sections~\ref{sec:preliminaries}--
\ref{sec:ray} provide the cohomological and ray-class ingredients.
Section~\ref{sec:batch} proves the main batch proposition, and
Section~\ref{sec:induction} applies it along the Frattini series.
Section~\ref{sec:elementary} treats the elementary abelian case.

\section{Some preliminary results}\label{sec:preliminaries}

Throughout the paper, $p$ is an odd prime.  Cohomology of a finite
$p$-group is taken with continuous cochains, which in this finite
setting is ordinary group cohomology.  For a finite $p$-group $Q$, put
\[
 h^i(Q,M)=\dim_{\Fp}H^i(Q,M),
 \qquad
 h^i(Q)=h^i(Q,\Fp).
\]
If $M$ is a finite-dimensional left $\Fp[Q]$-module, write
\[
 M^\vee=\Hom_{\Fp}(M,\Fp),
 \qquad
 (q\varphi)(m)=\varphi(q^{-1}m).
\]

\subsection{Two cohomological estimates}

\begin{lemma}\label{lem:cohomology-bounds}
Let $Q$ be a finite $p$-group of order $p^q$.  Then
\[
 h^1(Q)\leq q,
 \qquad
 h^2(Q)\leq\binom{q+1}{2}.
\]
\end{lemma}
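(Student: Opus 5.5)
We prove both bounds by induction on $q$, using a central series and the Hochschild--Serre five-term (and associated) exact sequences. For $h^1$, identify $H^1(Q,\Fp)=\Hom(Q,\Fp)=\Hom(Q/\PhiG(Q),\Fp)$. Here $Q/\PhiG(Q)$ is elementary abelian of order at most $|Q|=p^q$, so $h^1(Q)=\dim_{\Fp}Q/\PhiG(Q)\le q$. This part is immediate.

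For $h^2$ we use induction on $q$; the case $q=0$ is trivial, since $h^2(1)=0$. For $q\ge1$, choose a central subgroup $Z\le Q$ of order $p$ (possible since $Q$ is a nontrivial $p$-group) and put $\bar Q=Q/Z$, of order $p^{q-1}$. The Lyndon--Hochschild--Serre spectral sequence for $1\to Z\to Q\to\bar Q\to1$ with coefficients in $\Fp$ has $E_2^{a,b}=H^a(\bar Q,H^b(Z,\Fp))$. Because $Z$ is central and $H^b(Z,\Fp)\cong\Fp$ for every $b$, the $\bar Q$-action on these coefficients is trivial. The total degree two terms are therefore
\[
E_2^{2,0}=H^2(\bar Q),\qquad E_2^{1,1}=H^1(\bar Q,\Fp),\qquad E_2^{0,2}=H^2(Z,\Fp)\cong\Fp .
\]
Since $h^2(Q)$ is bounded by the sum of the dimensions of the $E_\infty$ terms in total degree two, and these are subquotients of the $E_2$ terms,
\[
h^2(Q)\le h^2(\bar Q)+h^1(\bar Q)+1\le \binom{q}{2}+(q-1)+1=\binom{q+1}{2},
\]
by the induction hypothesis and the $h^1$ bound. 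This closes the induction.

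The only point needing care is the middle step: the coefficient modules $H^b(Z,\Fp)$ must be trivial $\bar Q$-modules, which is exactly why $Z$ is taken central. They must also be one-dimensional; this holds for all $b$ because $Z$ is cyclic of order $p$, and for $p$ odd we have $H^2(Z,\Fp)\cong\Fp$. Beyond this, the spectral-sequence bound is routine.

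Alternatively, one can avoid the spectral sequence altogether. Take a minimal presentation $1\to R\to F\to Q\to1$ with $F$ free pro-$p$ of rank $d=h^1(Q)$. Then $h^2(Q)=\dim_{\Fp}R/R^p[R,F]$, and the Schreier bound on generators of $R$ as a normal subgroup gives the same estimate. I would present the spectral-sequence argument, as it is shorter.
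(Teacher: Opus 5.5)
Your proof is correct and is essentially the paper's argument: both run the Lyndon--Hochschild--Serre spectral sequence along a series with $C_p$ layers to get a recursion of the form $h^2(Q)\le h^2(Q/Z)+h^1(Q/Z)+1$ (in the paper, $h^2(Q_{j+1})\le h^2(Q_j)+h^1(Q_j)+1$) and then sum. The only difference is that you split off a central $Z\simeq C_p$ as the kernel, so all coefficient modules $H^b(Z,\Fp)$ are trivial, whereas the paper climbs through the subgroups $Q_j\subset Q_{j+1}$ of a central series with quotient $C_p$. Your closing Schreier-type remark is not substantiated, but you do not rely on it.
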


\begin{proof}
The first inequality follows from
$h^1(Q)=\dim_{\Fp}Q/\PhiG(Q)$.  Choose a central series
\[
 1=Q_0\subset Q_1\subset\cdots\subset Q_q=Q
\]
with $Q_{j+1}/Q_j\simeq C_p$.  The Lyndon--Hochschild--Serre
five-term sequence gives
\[
 h^1(Q_{j+1})\leq h^1(Q_j)+1,
 \qquad
 h^2(Q_{j+1})\leq h^2(Q_j)+h^1(Q_j)+1;
\]
see also \cite[Proposition~2.4]{HMR-Ozaki}.  Since
$h^1(Q_j)\leq j$, summing the second inequality gives
\[
 h^2(Q)\leq\sum_{j=0}^{q-1}(j+1)
 =\binom{q+1}{2}.
\]
\end{proof}

\subsection{Equivariant Kummer--Chebotarev}

We shall choose primes orbit by orbit.  The following lemma permits us
to prescribe an entire $R$-linear residue-symbol coordinate at one
prime of $k$.

\begin{lemma}\label{lem:equivariant-chebotarev}
Let $K/k$ be a $Q$-extension, where $Q$ is a finite $p$-group.  Assume
that $p$ is unramified in $K/\Q$ and that
$\mu_p\not\subset K$.  Put $R=\Fp[Q]$.  Let
\[
 Z\subset K^\times/K^{\times p}
\]
be a finite-dimensional $R$-submodule, and let
$L:Z\to R$ be $R$-linear.  Outside any prescribed finite set of
primes, there are infinitely many primes $v$ of $k$ with the following
properties:
\begin{enumerate}
\item $v$ splits completely in $K(\mu_p)/k$;
\item $v_p(\Nm v-1)=1$;
\item after choosing a prime $\mathfrak v$ of $K$ above $v$ and a
generator of the local $p$-Sylow subgroup, the residue-symbol map on
the $Q$-orbit of $\mathfrak v$ is $L$.
\end{enumerate}
\end{lemma}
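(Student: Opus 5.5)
The plan is to reduce the equivariant statement to an ordinary Chebotarev statement in a Kummer extension of $K(\mu_p)$, using that a $Q$-orbit of primes of $K$ above a split prime $v$ of $k$ is a $Q$-torsor. First, interpret the data. If $v$ splits completely in $K(\mu_p)/k$, fix $\mathfrak v\mid v$ in $K$. The primes above $v$ are then exactly $\sigma\mathfrak v$ for $\sigma\in Q$, and each residue field equals that of $v$, of size $\Nm v\equiv 1\pmod p$. Fix a generator $\zeta$ of the $p$-Sylow subgroup of $(\cO_k/v)^\times$, or equivalently fix a primitive $p$-th root of unity locally. For $z\in Z$ prime to $v$, the residue symbol $\left(\frac{z}{\sigma\mathfrak v}\right)\in\Fp$ is then defined, and the orbit map is
\[
 \lambda_{\mathfrak v}(z)=\sum_{\sigma\in Q}\left(\frac{z}{\sigma\mathfrak v}\right)\sigma^{-1}\in R .
\]
This map is $R$-linear, since $\left(\frac{\tau z}{\sigma\mathfrak v}\right)=\left(\frac{z}{\tau^{-1}\sigma\mathfrak v}\right)$. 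Consequently $L$ is determined by the values $\varepsilon\circ L$, where $\varepsilon:R\to\Fp$ picks out the coefficient of $1$. So condition (3) is equivalent to prescribing the $\Fp$-linear functional $z\mapsto\left(\frac{z}{\mathfrak v}\right)$ on $Z$ to be $\varepsilon\circ L$.

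Second, translate this into a Frobenius condition. Put $K'=K(\mu_p)$. Since $\mu_p\not\subset K$ and $p$ is odd, $\Delta=\Gal(K'/K)$ has order prime to $p$. Hence $Z\to K'^\times/K'^{\times p}$ is injective and $Z$ maps into the $\Delta$-eigenspace on which $\Delta$ acts trivially. Let $M=K'(Z^{1/p})$. By Kummer theory $\Gal(M/K')\simeq\Hom(Z,\mu_p)$, and $M/k$ is Galois because $Z$ is $Q$-stable. Choose a prime $\mathfrak w$ of $K'$ above $\mathfrak v$. Then $\Frob_{\mathfrak w}(M/K')$ corresponds to the functional $z\mapsto\left(\frac{z}{\mathfrak v}\right)\cdot\zeta$, after identifying $\mu_p$ with $\Fp$ via the chosen generator. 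Thus condition (3) asks that $\Frob$ be a prescribed element $g$ of $\Gal(M/K')$ for the identification fixed by $\zeta$. Condition (1) asks that $\Frob$ lie in $\Gal(M/K')\subset\Gal(M/k)$, and Chebotarev in $M/k$ gives infinitely many $v$ whose Frobenius class contains $g$. The normalization issue goes away because conjugating $g$ by $\Gal(M/k)$ only changes the choice of $\mathfrak v$ within the orbit and of the local root of unity, and the statement explicitly allows these choices.

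Third, handle condition (2), $v_p(\Nm v-1)=1$. For this, adjoin $\mu_{p^2}$ as well: work in $M(\mu_{p^2})$ and require that $\Frob$ restrict trivially to $K'$ but nontrivially to $K(\mu_{p^2})/K(\mu_p)$. This is compatible with the requirement on $M/K'$ provided $M\cap K(\mu_{p^2})=K'$. That holds because $\mu_{p^2}$ generates over $K'$ a Kummer extension by a $p$-th root of $\zeta_p$. The element $\zeta_p$ lies in the eigenspace where $\Delta$ acts by the cyclotomic character $\omega$, which is nontrivial, whereas $Z$ lies in the trivial eigenspace. Here the hypothesis that $p$ is unramified in $K/\Q$ ensures that $K\cap\Q(\mu_{p^2})=\Q$, so that $[K(\mu_{p^2}):K']=p$. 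Chebotarev in $M(\mu_{p^2})/k$ then yields infinitely many $v$, and we may avoid any finite set.

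I expect this eigenspace disjointness, together with checking the equivariance normalization in the torsor identification, to be the main obstacle. I would handle it first by decomposing $K'^\times/K'^{\times p}$ under $\Delta$.
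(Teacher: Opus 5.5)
Your proposal is correct and follows essentially the same route as the paper. You reduce $L$ to the functional $\varepsilon\circ L$ via $\Hom_R(Z,R)\simeq Z^\vee$, realize that functional as a Kummer Frobenius in $K(\mu_p)(\sqrt[p]{Z})$, adjoin $\mu_{p^2}$ and separate the two extensions by the cyclotomic-versus-trivial action of $\Delta$ (you argue on the radical side, the paper on the Galois side), then apply Chebotarev in the compositum and transport the generator along the $Q$-orbit.

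One normalization needs fixing. From $(\tau z/\sigma\mathfrak v)=(z/\tau^{-1}\sigma\mathfrak v)$, your formula $\sum_\sigma (z/\sigma\mathfrak v)\sigma^{-1}$ satisfies $\lambda(\tau z)=\lambda(z)\tau^{-1}$ rather than $\tau\lambda(z)$. The $R$-linear orbit map is therefore $\sum_\sigma (z/\sigma\mathfrak v)\sigma$, as in the paper. Its coefficient of $1$ is still $(z/\mathfrak v)$, so the rest of your argument goes through unchanged.
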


\begin{proof}
Put $F=K(\mu_p)$.  Since $[F:K]$ is prime to $p$, the natural map
\[
 K^\times/K^{\times p}\longrightarrow F^\times/F^{\times p}
\]
is injective.  Kummer theory gives a perfect pairing
\[
 \Gal(F(\sqrt[p]{Z})/F)\times Z\longrightarrow\mu_p.
\]

The group algebra $R$ is symmetric.  More explicitly, restriction to
the coefficient of $1\in Q$ gives an isomorphism
\begin{equation}\label{eq:symmetric-algebra}
 \Hom_R(Z,R)\xrightarrow{\ \sim\ }Z^\vee.
\end{equation}
Its inverse sends $\varphi\in Z^\vee$ to
\begin{equation}\label{eq:Lphi}
 L_\varphi(z)=
 \sum_{g\in Q}\varphi(g^{-1}z)g.
\end{equation}
Thus $L$ determines a Kummer automorphism of
$F(\sqrt[p]{Z})/F$, after fixing an identification
$\mu_p\simeq\Fp$.

We also impose the exact $p$-part of the residue-field order.  Put
\[
 C=K(\mu_{p^2}).
\]
Absolute unramifiedness of $K$ at $p$ gives $[C:F]=p$.  Let
$\Delta=\Gal(F/K)$.  Conjugation by $\Delta$ on
$\Gal(F(\sqrt[p]{Z})/F)$ is through the nontrivial cyclotomic
character, while its action on $\Gal(C/F)$ is trivial.  It follows
that
\begin{equation}\label{eq:kummer-cyclotomic-disjoint}
 F(\sqrt[p]{Z})\cap C=F.
\end{equation}
Indeed, a nontrivial intersection would be $C$ and would give the same
one-dimensional $\Delta$-module with these two different actions.

Choose a nontrivial element of $\Gal(C/F)$ and combine it, using
\eqref{eq:kummer-cyclotomic-disjoint}, with the Kummer automorphism
attached to $L$.  Chebotarev's theorem in the resulting extension over
$k$ gives infinitely many primes $v$ whose Frobenius restricts trivially
to $F$ and has the prescribed two components over $F$.  Hence $v$ splits
completely in $F/k$.

Choose a prime $\mathfrak V$ above $v$ in the compositum and put
$\mathfrak v=\mathfrak V\cap K$.  After choosing a generator of the
local $p$-Sylow subgroup at $\mathfrak v$, let
$\lambda_{\mathfrak v}:Z\to\Fp$ be the corresponding local residue
functional.  The Kummer Frobenius condition gives
$\lambda_{\mathfrak v}=\varphi$, up to a nonzero scalar, and that
scalar is absorbed by the choice of the local generator.  Since
$[K:k]$ is a power of $p$ while $[k(\mu_p):k]$ divides $p-1$, we have
$K\cap k(\mu_p)=k$.  Thus the $Q$-action lifts to $F$ while fixing
$\mu_p$, and transporting the chosen local generator along the
$Q$-orbit gives, for every $g\in Q$,
\[
 \lambda_{g\mathfrak v}(z)
 =\lambda_{\mathfrak v}(g^{-1}z)
 =\varphi(g^{-1}z).
\]
Consequently the residue-symbol map on the whole orbit is
\[
 z\longmapsto
 \sum_{g\in Q}\lambda_{g\mathfrak v}(z)g
 =\sum_{g\in Q}\varphi(g^{-1}z)g
 =L(z)
\]
by \eqref{eq:Lphi}.  This proves the third assertion.

The nontrivial Frobenius in $C/F$ says that
$\Nm v\equiv1\pmod p$ but $\Nm v\not\equiv1\pmod{p^2}$.  This proves
the second assertion.  Chebotarev also permits the exclusion of any
fixed finite set.
\end{proof}

\begin{remark}\label{rem:expanded-radical}
Lemma~\ref{lem:equivariant-chebotarev} applies to any finite
$R$-submodule $Z$ of $K^\times/K^{\times p}$, not only to global
units.  We will first choose some prime orbits, adjoin Kummer classes
whose divisors are supported on those orbits, and then prescribe the
values of the remaining residue maps on the enlarged module $Z$.
\end{remark}

\section{Units and the arithmetic fundamental class}
\label{sec:units}

Let $K/k$ be an everywhere unramified Galois extension with
\[
 Q=\Gal(K/k)
\]
a nontrivial finite $p$-group.  Assume that $p\nmid h(K)$ and
$\mu_p\not\subset K$.  Put
\[
 E_{K,p}=\cO_K^\times\otimes_{\Z}\Z_p,
 \qquad U=E_{K,p}/pE_{K,p},
 \qquad R=\Fp[Q].
\]

\subsection{The third syzygy}

The following consequence of the Ritter--Weiss Tate sequence is the
module-theoretic input of the construction.

\begin{lemma}\label{lem:unit-syzygy}
There is an exact sequence of $R$-modules
\begin{equation}\label{eq:unit-syzygy}
 0\longrightarrow U\longrightarrow P_2\longrightarrow P_1
 \longrightarrow R\longrightarrow\Fp\longrightarrow0,
\end{equation}
where $P_1$ and $P_2$ are finite free $R$-modules.  If
$\eta_{K/k}\in H^3(Q,U)$ is its Yoneda class, then for every finite
$R$-module $W$ the map
\begin{equation}\label{eq:eta-surjection}
 \Hom_R(U,W)\longrightarrow H^3(Q,W),
 \qquad f\longmapsto f_*\eta_{K/k},
\end{equation}
is surjective.  Moreover,
\begin{equation}\label{eq:H3U-one}
 H^3(Q,U)\simeq\Fp.
\end{equation}
\end{lemma}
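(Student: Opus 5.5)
We will derive the sequence from the Ritter--Weiss Tate sequence for $K/k$ with $S$ the set of archimedean places. Since $p\nmid h(K)$, the $p$-part of the $S$-class group vanishes. Tensoring with $\Z_p$ then gives a four-term exact sequence of $\Z_p[Q]$-modules
\[
 0\to E_{K,p}\to A\to B\to \nabla\to 0,
\]
with $A$ cohomologically trivial and $B$ projective. In the Ritter--Weiss version, $\nabla$ sits in an exact sequence $0\to \Cl_K\otimes\Z_p\to\nabla\to X_S\otimes\Z_p\to0$, hence $\nabla\simeq X_{S,p}=\ker(\Z_p[S_K]\to\Z_p)$. Its extension class is the Tate canonical class, a generator of $\hat H^2(Q,\Hom(X,E))$.

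We then modify this sequence to the shape \eqref{eq:unit-syzygy}. Splice in $0\to X_{S,p}\to\Z_p[S_K]\to\Z_p\to0$. Because $K/k$ is unramified and $p$ is odd, every archimedean place splits completely, so $\Z_p[S_K]$ is a free $\Z_p[Q]$-module of rank $r=|S_k|$. Replacing $A$ by a free module via the standard projective-cover trick, we get an exact sequence
\[
 0\to E_{K,p}\to P_2'\to P_1'\to P_0'\to\Z_p\to0
\]
of finitely generated free $\Z_p[Q]$-modules. The hypothesis $\mu_p\not\subset K$ makes $E_{K,p}$ $\Z_p$-torsion free, so this sequence stays exact after reduction mod $p$, giving $0\to U\to \bar P_2\to\bar P_1\to\bar P_0\to\Fp\to0$.

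To reduce $\bar P_0$ to rank one, note that $\bar P_0\to\Fp$ factors through a surjection from $R$. Write $\bar P_0=R\oplus R^{r-1}$ with the second summand in the kernel of the augmentation. The map $\bar P_1\to\bar P_0$ covers this kernel, and projective modules are injective over the self-injective algebra $R$. Hence we can split off the summand $R^{r-1}$ from $\bar P_1$ compatibly, that is, add a contractible complex. This is the step I expect to be the main technical obstacle: keeping the splitting honest, and checking that the Tate class survives as the Yoneda class.

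With \eqref{eq:unit-syzygy} in hand, $U$ is a third syzygy $\Omega^3\Fp$ up to free summands. Dimension shifting gives $H^3(Q,W)\simeq \operatorname{Ext}^3_R(\Fp,W)$, which is computed by the map from $\Hom_R(U,W)$ modulo homomorphisms that extend to $P_2$. Since $P_2$ is injective, such extensions exist for any map factoring through $P_2$. Hence $f\mapsto f_*\eta$ identifies $H^3(Q,W)$ with $\underline{\Hom}_R(U,W)$, so \eqref{eq:eta-surjection} is surjective. Finally, taking $W=U$ or using dimension shifting once more,
\[
 H^3(Q,U)\simeq \hat H^3(Q,\Omega^3\Fp)\simeq\hat H^0(Q,\Fp)=\Fp,
\]
since Tate cohomology ignores free summands and $H^3=\hat H^3$ in positive degree. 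This proves \eqref{eq:H3U-one}.
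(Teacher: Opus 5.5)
Your proposal is correct and follows essentially the paper's route. Both arguments take the $p$-adic Ritter--Weiss sequence with vanishing $p$-class group, splice it with an augmentation sequence, reduce modulo $p$ using $\Z_p$-torsion-freeness, and then dimension-shift; your $\hat H^0(Q,\Fp)$ computation is the same shift as the paper's $H^3(Q,U)\simeq H^1(Q,I_Q)\simeq\Fp$.

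The one technical difference is that you keep $X_{S,p}\simeq I_{\Z_p[Q]}\oplus\Z_p[Q]^{r-1}$ and split off a contractible summand, whereas the paper takes the translation module to be $I_{\Z_p[Q]}$ directly. In that splitting step, what splits $\bar P_1\to R^{r-1}$ is the projectivity of $R^{r-1}$, not self-injectivity. Also, no comparison with Tate's canonical class is needed for this lemma, since $\eta_{K/k}$ is simply defined as the Yoneda class.
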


\begin{proof}
Let $I_{\Z_p[Q]}$ be the augmentation ideal of $\Z_p[Q]$.
Since the $p$-part of $\Cl(K)$ is trivial, the translation module in
the $p$-adic Ritter--Weiss sequence is $I_{\Z_p[Q]}$.  Corollary~2.2
of \cite{KataokaOzaki} gives an exact sequence
\[
 0\longrightarrow E_{K,p}\longrightarrow P
 \longrightarrow F\longrightarrow I_{\Z_p[Q]}
 \longrightarrow0,
\]
where $F$ is free and $P$ is projective.  The ring $\Z_p[Q]$ is local,
so $P$ is free.  Splicing with
\[
 0\longrightarrow I_{\Z_p[Q]}\longrightarrow\Z_p[Q]
 \longrightarrow\Z_p\longrightarrow0
\]
gives
\[
 0\longrightarrow E_{K,p}\longrightarrow P\longrightarrow F
 \longrightarrow\Z_p[Q]\longrightarrow\Z_p\longrightarrow0.
\]
All modules occurring as intermediate images are $\Z_p$-torsion-free.
Reduction modulo $p$ is therefore exact and gives
\eqref{eq:unit-syzygy}.

Sequence \eqref{eq:unit-syzygy} is a projective resolution through
degree three.  Applying $\Hom_R(-,W)$ to the last three projective
terms gives
\[
 H^3(Q,W)
 \simeq
 \operatorname{coker}\!\left(
 \Hom_R(P_2,W)\longrightarrow\Hom_R(U,W)
 \right).
\]
Under this identification, the class of
$f\in\Hom_R(U,W)$ is precisely the Yoneda pushout
$f_*\eta_{K/k}$.  Hence the quotient map
$\Hom_R(U,W)\twoheadrightarrow H^3(Q,W)$ is
\eqref{eq:eta-surjection}.

Let $I_Q$ be the augmentation ideal of $R$.  Dimension shifting in
\eqref{eq:unit-syzygy} gives
\[
 H^3(Q,U)\simeq H^1(Q,I_Q).
\]
The exact sequence
\[
 0\longrightarrow I_Q\longrightarrow R\longrightarrow\Fp
 \longrightarrow0
\]
and $H^i(Q,R)=0$ for $i>0$ give
\[
 H^1(Q,I_Q)\simeq
 \Fp/\operatorname{aug}(R^Q).
\]
Now $R^Q=\Fp T_Q$, where $T_Q=\sum_{q\in Q}q$, and
$\operatorname{aug}(T_Q)=|Q|=0$ in $\Fp$.  This proves
\eqref{eq:H3U-one}.
\end{proof}

\subsection{Comparison with the fundamental class}

We next identify the class which is relevant to the arithmetic
extension problem.  Let $J_K$ and $C_K=J_K/K^\times$ be the id\`ele
and id\`ele class groups.  Define
\[
 J_K^u=
 \prod_{w<\infty}\cO_{K_w}^\times
 \times\prod_{w\mid\infty}K_w^\times,
\]
viewed as a subgroup of $J_K$, and put
\[
 \cJ=J_K^u\otimes_{\Z}\Z_p,
 \qquad
 \cC=C_K\otimes_{\Z}\Z_p.
\]
These are ordinary algebraic tensor products.

\begin{lemma}\label{lem:arithmetic-class}
There is an exact sequence
\begin{equation}\label{eq:idele-units}
 0\longrightarrow E_{K,p}\longrightarrow\cJ
 \longrightarrow\cC\longrightarrow0.
\end{equation}
The module $\cJ$ is $Q$-cohomologically trivial.  If
$u_{K/k}\in H^2(Q,\cC)$ is the $p$-primary global fundamental class
and
\[
 \theta_{K/k}=\delta(u_{K/k})\in H^3(Q,E_{K,p}),
\]
then the reduction
\[
 \overline\theta_{K/k}\in H^3(Q,U)
\]
is nonzero.  Consequently, for every finite $R$-module $W$, the map
\begin{equation}\label{eq:arithmetic-surjection}
 \Hom_R(U,W)\longrightarrow H^3(Q,W),
 \qquad f\longmapsto f_*\overline\theta_{K/k},
\end{equation}
is surjective.
\end{lemma}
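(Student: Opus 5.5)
We will prove the three assertions in order. Throughout, the key inputs are the idèle-theoretic description of the class group and Tate's theorem on the global fundamental class.

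\emph{Step 1: the sequence \eqref{eq:idele-units}.} The map $J_K^u\to C_K$ has kernel $J_K^u\cap K^\times=\cO_K^\times$. Its cokernel is $J_K/(K^\times J_K^u)$, which is the ideal class group $\Cl(K)$; the archimedean components are all included in $J_K^u$, so no narrow-class correction appears. This gives
\[
0\to\cO_K^\times\to J_K^u\to C_K\to\Cl(K)\to0 .
\]
Tensoring with $\Z_p$ is exact, since $\Z_p$ is flat over $\Z$. Because $p\nmid h(K)$, the last term vanishes after tensoring, and we obtain \eqref{eq:idele-units}.

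\emph{Step 2: cohomological triviality of $\cJ$.} Group the places of $K$ into $Q$-orbits over places $v$ of $k$. Each orbit contributes an induced module $\operatorname{Ind}_{Q_w}^{Q}$ of the local factor.
\begin{itemize}
\item At a finite $w$, the extension $K/k$ is unramified. Hence $\cO_{K_w}^\times$ is cohomologically trivial for the decomposition group $Q_w$, by the standard local computation for unramified extensions.
\item At an infinite $w$: since $p$ is odd, $Q_w$ is trivial at real places, and complex places split completely, so these factors are induced from the trivial group.
\end{itemize}
The restricted product is a direct limit of finite products of such induced, cohomologically trivial modules. Tensoring with $\Z_p$ preserves cohomological triviality for a $p$-group, since only $p$-primary cohomology matters. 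One technical point must be checked here: that the algebraic tensor product $-\otimes\Z_p$ commutes with the relevant cohomology. We will handle it by noting that for finite $Q$, $H^i(Q,M\otimes\Z_p)=H^i(Q,M)\otimes\Z_p$ because $\Z_p$ is flat and the standard cochain complex is made of finite products.

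\emph{Step 3: the boundary map and nonvanishing of $\overline\theta_{K/k}$.} By Step 2, the connecting map $\delta\colon \hat H^2(Q,\cC)\to \hat H^3(Q,E_{K,p})$ is an isomorphism. By Tate, $\hat H^2(Q,C_K)$ is cyclic of order $|Q|$ and generated by the fundamental class. The $p$-completion preserves this, so $u_{K/k}$ generates a cyclic group of order $|Q|$, and hence so does $\theta_{K/k}$ in $H^3(Q,E_{K,p})$.

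Next consider the short exact sequence $0\to E_{K,p}\xrightarrow{p}E_{K,p}\to U\to0$. This requires $E_{K,p}$ to be $\Z_p$-torsion-free, which holds because $\mu_p\not\subset K$. Its long exact sequence gives an injection
\[
H^3(Q,E_{K,p})/p\hookrightarrow H^3(Q,U).
\]
The image of a generator of a nontrivial cyclic $p$-group is nonzero in the quotient by $p$. Since $Q\ne1$, this shows $\overline\theta_{K/k}\neq0$.

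\emph{Step 4: surjectivity of \eqref{eq:arithmetic-surjection}.} By \eqref{eq:H3U-one}, $H^3(Q,U)\simeq\Fp$, so the nonzero class satisfies $\overline\theta_{K/k}=c\,\eta_{K/k}$ for some $c\in\Fp^\times$. Therefore
\[
f_*\overline\theta_{K/k}=(cf)_*\eta_{K/k},
\]
and surjectivity follows from \eqref{eq:eta-surjection}.

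The main obstacle is Step 2: the cohomological triviality of the $p$-adically tensored restricted product, including the archimedean factors. A secondary point in Step 3 is the identification of $u_{K/k}$ with the $p$-part of Tate's generator after tensoring with $\Z_p$.
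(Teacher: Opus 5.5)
Your proposal is correct and follows essentially the same route as the paper: the id\`ele sequence tensored with the flat module $\Z_p$, then orbitwise Shapiro plus flatness for the cohomological triviality of $\cJ$, then the boundary isomorphism sending the fundamental class to a generator of $H^3(Q,E_{K,p})\simeq\Z_p/|Q|\Z_p$, then the multiplication-by-$p$ sequence, and finally the comparison with $\eta_{K/k}$ in the one-dimensional group $H^3(Q,U)$. Two small repairs are needed: $J_K^u$ is a full direct product of its local factors (not a direct limit of finite products), so the correct justification is that Tate cohomology of a finite group commutes with arbitrary direct products; and before writing $\overline\theta_{K/k}=c\,\eta_{K/k}$ you should note that $\eta_{K/k}\neq0$, which follows from \eqref{eq:eta-surjection} with $W=U$ (alternatively, write $\eta_{K/k}=c'\,\overline\theta_{K/k}$ with $c'\in\Fp$, which needs no such check).
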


\begin{proof}
The standard id\`ele sequence is
\[
 0\longrightarrow\cO_K^\times\longrightarrow J_K^u
 \longrightarrow C_K\longrightarrow\Cl(K)\longrightarrow0.
\]
The ring $\Z_p$ is flat over $\Z$, and
$\Cl(K)\otimes\Z_p=0$ because $p\nmid h(K)$.  This proves
\eqref{eq:idele-units}.

For a finite place $v$ of $k$ and a place $w$ of $K$ above $v$, the
local extension $K_w/k_v$ is unramified.  Its decomposition group is
cyclic, and the local unit group $\cO_{K_w}^\times$ has trivial Tate
cohomology.  The infinite decomposition groups are trivial because
$K/k$ has odd degree and is unramified at infinity.  Shapiro's lemma,
applied orbit by orbit, shows that $J_K^u$ is $Q$-cohomologically
trivial.  Tensoring with $\Z_p$ preserves this assertion.  Indeed, a
complete resolution of a finite group may be chosen with finitely
generated free terms, and flatness preserves the exactness of the
resulting Tate cochain complex.

The usual global class formation gives
\[
 H^2(Q,C_K)\simeq\Z/|Q|\Z,
\]
with the global fundamental class as a generator; see
\cite[Chapter~VIII, Theorem~4.7]{MilneCFT}.  Since $Q$ is finite, we
may compute cohomology using a resolution by finitely generated free
$\Z[Q]$-modules.  Flatness of $\Z_p$ over $\Z$ therefore gives
\[
 H^2(Q,\cC)
 \simeq H^2(Q,C_K)\otimes_{\Z}\Z_p
 \simeq\Z_p/|Q|\Z_p,
\]
and the image $u_{K/k}$ of the global fundamental class is a generator.
Since $\cJ$ is cohomologically trivial, the boundary map in
\eqref{eq:idele-units} is an isomorphism.  Thus
$\theta_{K/k}$ generates the cyclic group
\[
 H^3(Q,E_{K,p})\simeq\Z_p/|Q|\Z_p.
\]

Because $\mu_p\not\subset K$, the $p$-adic completion of the unit
group has no $p$-torsion; in particular $E_{K,p}$ is a finite free
$\Z_p$-module.  Hence
\[
 0\longrightarrow E_{K,p}\xrightarrow{\,p\,}E_{K,p}
 \longrightarrow U\longrightarrow0
\]
is exact.  The associated cohomology sequence shows that the kernel of
\[
 H^3(Q,E_{K,p})\longrightarrow H^3(Q,U)
\]
is exactly $pH^3(Q,E_{K,p})$.  Therefore the image
$\overline\theta_{K/k}$ of a generator is nonzero.

By Lemma~\ref{lem:unit-syzygy}, $H^3(Q,U)$ is one-dimensional.  The
class $\eta_{K/k}$ is also nonzero, since otherwise the surjection
\eqref{eq:eta-surjection} with $W=U$ would be the zero map.  Hence
$\overline\theta_{K/k}$ and $\eta_{K/k}$ differ by a nonzero scalar,
and \eqref{eq:arithmetic-surjection} follows from
\eqref{eq:eta-surjection}.
\end{proof}

\begin{remark}\label{rem:trivial-Q}
When $Q=1$, all positive-degree cohomology groups in this section
vanish.  In that case the condition involving
$\overline\theta_{K/k}$ is empty; the ray construction below starts
with any required surjection from the unit space.
\end{remark}

\subsection{A surjective unit map}

\begin{proposition}\label{prop:unit-map}
Let
\begin{equation}\label{eq:WBpresentation}
 0\longrightarrow W\xrightarrow{i}V=R^t
 \xrightarrow{\partial}B\longrightarrow0
\end{equation}
be exact, and let $\alpha\in H^2(Q,B)$.  Suppose that
\[
 U\simeq R^\lambda\oplus N
\]
and
\begin{equation}\label{eq:lambda-exact-condition}
 \lambda\geq d_R(W)
 =t-\dim_{\Fp}B_Q+h^1(Q,B^\vee).
\end{equation}
Then there is a surjective $R$-homomorphism
\[
 f:U\longrightarrow W
\]
such that
\begin{equation}\label{eq:class-equation}
 f_*\overline\theta_{K/k}=\delta_\partial(\alpha).
\end{equation}
Here
\[
 \delta_\partial:H^2(Q,B)\xrightarrow{\ \sim\ }H^3(Q,W)
\]
is the connecting isomorphism associated to
\eqref{eq:WBpresentation}.  For $Q=1$, the same conclusion holds with
\eqref{eq:class-equation} omitted.
\end{proposition}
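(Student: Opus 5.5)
By Lemma~\ref{lem:arithmetic-class}, the map $\Hom_R(U,W)\to H^3(Q,W)$, $f\mapsto f_*\overline\theta_{K/k}$, is surjective. So there is some $f_0:U\to W$ with $f_{0*}\overline\theta_{K/k}=\delta_\partial(\alpha)$; such an $f_0$ need not be surjective. The plan is to correct $f_0$ by a homomorphism that does not change the class but does reach all of $W$. The set of $f$ satisfying \eqref{eq:class-equation} is the coset $f_0+\ker$, where $\ker$ is the kernel of the surjection $\Hom_R(U,W)\to H^3(Q,W)$. By the identification in the proof of Lemma~\ref{lem:unit-syzygy}, this kernel is the image of $\Hom_R(P_2,W)$, i.e. the homomorphisms $U\to W$ that factor through $U\hookrightarrow P_2$.

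\emph{Correcting term.} Decompose $U\simeq R^\lambda\oplus N$. Since $R$ is a self-injective (Frobenius) algebra, the free summand $R^\lambda$ of $U$ is injective, so it splits off inside $P_2$ as well. Hence any homomorphism $g:R^\lambda\to W$, extended by zero on $N$, extends to $P_2$ and lies in $\ker$; in fact $R$-modules maps out of an injective module factor through any injective hull. With $\lambda\geq d_R(W)$ we may choose such a $g$ that is surjective onto $W$.

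\emph{Making $f_0+g$ surjective.} Surjectivity of an $R$-map onto $W$ only has to be checked modulo the radical, i.e. on $W/I_QW$, since $R$ is local. Working in the $\Fp$-vector space $W/I_QW$ of dimension $d_R(W)$, we choose $g$ restricted to $R^\lambda$ so that $\bar g+\bar f_0$ maps the generators of $R^\lambda$ onto a spanning set of $W/I_QW$. This is possible because $\lambda\geq d_R(W)$ and the values on the free generators are arbitrary: take $g(e_j)=w_j-f_0(e_j)$, where $w_1,\dots,w_\lambda$ generate $W$. Then $f=f_0+g$ is surjective by Nakayama and satisfies \eqref{eq:class-equation}. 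For $Q=1$ the same choice works with no class condition.

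\emph{Formula for $d_R(W)$.} Compute $d_R(W)=\dim W/I_QW=\dim W_Q$ from \eqref{eq:WBpresentation}. The coinvariant sequence $H_1(Q,B)\to W_Q\to V_Q\to B_Q\to0$ has $H_1(Q,V)=0$, and the connecting map $H_1(Q,B)\to W_Q$ is injective because $H_1(Q,V)=0$. This gives $\dim W_Q=t-\dim B_Q+\dim H_1(Q,B)$, and duality $H_1(Q,B)^\vee\simeq H^1(Q,B^\vee)$ yields the stated formula. That $\delta_\partial$ is an isomorphism follows because $V$ is free, hence cohomologically trivial. The main point to verify carefully is that maps supported on the free summand genuinely lie in $\ker$, i.e. extend through $U\hookrightarrow P_2$. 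I would argue this via injectivity of $R^\lambda$: the restriction to $R^\lambda$ of the inclusion into $P_2$ is split injective, so a map on $R^\lambda$ extends to $P_2$ via the splitting.
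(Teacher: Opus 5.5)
Your proof is correct, and the map you build is the paper's own map: $f_0+g$, with $g(e_j)=w_j-f_0(e_j)$ and $g|_N=0$, sends $e_j\mapsto w_j$ on $R^\lambda$ and agrees with $f_0$ on $N$.

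You differ only in how you show that the correction leaves the class unchanged. The paper observes that $f-f_0$ factors through the projection $\pi:U\to R^\lambda$. Hence $(f-f_0)_*\overline\theta_{K/k}$ passes through $H^3(Q,R^\lambda)=0$, which uses only that free modules are cohomologically trivial.

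Your dual route, via $\ker=\operatorname{im}\Hom_R(P_2,W)$ and injectivity of $R^\lambda$, also works but needs more:
\begin{itemize}
\item self-injectivity of $R$;
\item the fact that $\overline\theta_{K/k}$ is a nonzero multiple of $\eta_{K/k}$ (Lemma~\ref{lem:arithmetic-class}), so that the Yoneda description from Lemma~\ref{lem:unit-syzygy}, stated for $\eta_{K/k}$, applies;
\item a corrected extension step.
\end{itemize}
On the last point: an arbitrary retraction $\sigma:P_2\to R^\lambda$ need not vanish on $N$. Then $g\circ\sigma$ does not restrict to ``$g$ extended by zero on $N$''. You should instead extend $\pi$ itself to $P_2$, which is possible because $R^\lambda$ is injective. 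Alternatively, use $g\circ\sigma|_U$ as the correction; it still sends $e_j$ to $w_j-f_0(e_j)$, so surjectivity is unaffected.
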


\begin{proof}
Since $V$ is free, $\delta_\partial$ is an isomorphism.  If $Q\neq1$,
Lemma~\ref{lem:arithmetic-class} gives an $R$-homomorphism
$f_0:U\to W$ satisfying \eqref{eq:class-equation}.  If $Q=1$, take
$f_0=0$.

By Lemma~\ref{lem:kernel-generators} and
\eqref{eq:lambda-exact-condition}, there is a surjection
\[
 g:R^\lambda\longrightarrow W.
\]
Define $f$ to be $g$ on $R^\lambda$ and $f_0$ on $N$.  Then $f$ is
surjective.  The difference $f-f_0$ factors through the projective
module $R^\lambda$, and therefore induces the zero map on positive
degree cohomology.  Thus $f$ still satisfies
\eqref{eq:class-equation}.
\end{proof}

We shall obtain the free rank $\lambda$ from the exact formula of
Hajir--Maire--Ramakrishna.

\begin{lemma}\label{lem:Minkowski-rank}
Suppose that $K=\Lp(k)$ is finite over $k$, and put
$Q=\Gal(K/k)$.  If $\mu_p\not\subset k$, then
\[
 U\simeq R^\lambda\oplus N,
 \qquad
 \lambda=r_1(k)+r_2(k)-1+h^1(Q)-h^2(Q)
\]
for some finite $R$-module $N$.
\end{lemma}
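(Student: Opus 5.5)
The plan is to compare the unit module $U$ with the third syzygy of $\Fp$ in a minimal free resolution, and then read off the free rank from a dimension count. First I check that the hypotheses of Section~\ref{sec:units} hold for $K=\Lp(k)$:
\begin{itemize}
\item Since $K$ is the maximal unramified $p$-extension of $k$, $K$ has no nontrivial unramified abelian $p$-extension, so $p\nmid h(K)$.
\item Since $[K:k]$ is a power of $p$ while $[k(\mu_p):k]$ divides $p-1$, the hypothesis $\mu_p\not\subset k$ gives $\mu_p\not\subset K$.
\end{itemize}
Thus Lemma~\ref{lem:unit-syzygy} applies: there is an exact sequence $0\to U\to P_2\to P_1\to R\to\Fp\to0$ with $P_1,P_2$ finite free. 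Also $E_{K,p}$ is $\Z_p$-free, as in the proof of Lemma~\ref{lem:arithmetic-class}.

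The next step computes $\dim_{\Fp}U$. Since $K/k$ is unramified at infinity and $|Q|$ is odd, every archimedean place of $k$ splits completely in $K$. Hence $r_1(K)+r_2(K)=|Q|\,(r_1(k)+r_2(k))$, and Dirichlet's unit theorem gives
\[
 \dim_{\Fp}U=\operatorname{rank}_{\Z_p}E_{K,p}=|Q|\,(r_1(k)+r_2(k))-1 .
\]

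Now I compare with a minimal resolution. Since $R=\Fp[Q]$ is local, a minimal free resolution $\cdots\to R^{h^2(Q)}\to R^{h^1(Q)}\to R\to\Fp\to0$ exists. Minimality makes the differentials of $\Hom_R(-,\Fp)$ vanish, so its ranks are exactly $h^i(Q)$. Let $\Omega^3\Fp$ be its third syzygy; the alternating sum of dimensions gives
\[
 \dim\Omega^3\Fp=|Q|\,(h^2(Q)-h^1(Q)+1)-1 .
\]
Schanuel's lemma, applied to the given sequence and the minimal one, yields $U\oplus R^{b}\simeq\Omega^3\Fp\oplus R^{c}$ for some integers $b,c\ge0$. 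Since $R$ is local, finitely generated $R$-modules satisfy Krull--Schmidt. Cancelling the free summands $R^{\min(b,c)}$ gives $U\simeq\Omega^3\Fp\oplus R^{a}$ with $a=c-b$. The key point is that $a\ge0$: if instead $b>c$, then $R^{b-c}$ would have to be a summand of $\Omega^3\Fp$. A minimal syzygy has no free summand, because $R$ is self-injective and a free summand would split off the preceding term, contradicting minimality.

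Finally, comparing dimensions,
\[
 a|Q|=\dim U-\dim\Omega^3\Fp=|Q|\,\bigl(r_1(k)+r_2(k)-1+h^1(Q)-h^2(Q)\bigr).
\]
Hence $a=\lambda$, and we take $N=\Omega^3\Fp$. The one delicate step is the cancellation and the sign of $a$, i.e.\ that the minimal syzygy contains no free summand. This is handled by self-injectivity of $\Fp[Q]$ together with minimality, and it also shows that $\lambda\ge0$ automatically under these hypotheses.
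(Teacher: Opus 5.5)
Your argument is correct, but it is not the paper's route. The paper gives no internal proof at all and simply cites Fact~5 of \cite{HMR-Ozaki}, or equivalently the unit-module formula of \cite{HMR-Deficiency}.

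You instead derive the lemma from the paper's own Lemma~\ref{lem:unit-syzygy}. Since $U$ is a third syzygy of $\Fp$ in a free resolution, Schanuel's lemma plus Krull--Schmidt gives $U\simeq\Omega^3\Fp\oplus R^a$, and the absence of free summands in a minimal syzygy over the self-injective local ring $R$ rules out $a<0$. The dimension count then forces $a=\lambda$; it uses Dirichlet's theorem, complete splitting of archimedean places in odd degree, and the fact that the minimal resolution has Betti numbers $h^i(Q)$.

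This buys more than the statement asks for:
\begin{itemize}
\item $N\simeq\Omega^3\Fp$ is determined up to isomorphism and has no free summand. So $\lambda$ is literally \emph{the} free rank invoked in Proposition~\ref{prop:batch} and in Section~\ref{sec:induction}, rather than merely a lower bound for it.
\item $\lambda\geq 0$ comes out automatically. This recovers the Shafarevich-type inequality $h^2(Q)-h^1(Q)\leq r_1(k)+r_2(k)-1$ in this setting.
\end{itemize}

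The trade-off is that the lemma now rests on the Ritter--Weiss/Kataoka--Ozaki input behind Lemma~\ref{lem:unit-syzygy} rather than on an external citation. The standing hypotheses of Section~\ref{sec:units} include $Q\neq1$, so you should dispose of $Q=1$ separately. That case is trivial: $R=\Fp$, $h^1(Q)=h^2(Q)=0$, and $U\simeq\Fp^{\,r_1(k)+r_2(k)-1}$ with $N=0$.
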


\begin{proof}
This is Fact~5 of \cite{HMR-Ozaki}; it is also a consequence of the
unit-module formula in \cite{HMR-Deficiency}.
\end{proof}

\subsection{The number of generators of a relation module}

Let $R=\Fp[Q]$ and let $I_Q$ be its augmentation ideal.  Since $Q$ is
a $p$-group, $R$ is a local ring with Jacobson radical $I_Q$.  Thus the
minimal number of generators of a finite $R$-module $N$ is
\[
 d_R(N)=\dim_{\Fp}N_Q,
 \qquad N_Q=N/I_QN.
\]

\begin{lemma}\label{lem:kernel-generators}
Let
\[
 0\longrightarrow W\longrightarrow R^t
 \xrightarrow{\partial}B\longrightarrow0
\]
be an exact sequence of finite $R$-modules.  Then
\begin{equation}\label{eq:kernel-generators}
 d_R(W)
 =t-\dim_{\Fp}B_Q+h^1(Q,B^\vee).
\end{equation}
If $d=h^1(Q)$ and $b=\dim_{\Fp}B$, then
\[
 d_R(W)\leq t+bd.
\]
\end{lemma}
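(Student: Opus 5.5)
The plan is to compute $d_R(W)=\dim_{\Fp}W_Q=\dim_{\Fp}H_0(Q,W)$ from the long exact sequence in group homology attached to
\[
 0\longrightarrow W\longrightarrow R^t\xrightarrow{\partial}B\longrightarrow0 .
\]
The first formula should then follow from a duality statement, and the inequality from a composition-series estimate.

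\emph{Step 1 (homology sequence).} The module $R^t$ is free, so $H_1(Q,R^t)=0$. Also $H_0(Q,R^t)=(R^t)_Q\simeq\Fp^t$. The tail of the homology sequence is therefore
\[
 0\longrightarrow H_1(Q,B)\longrightarrow W_Q\longrightarrow \Fp^t\longrightarrow B_Q\longrightarrow 0 .
\]
Counting dimensions gives
\[
 d_R(W)=t-\dim_{\Fp}B_Q+\dim_{\Fp}H_1(Q,B).
\]

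\emph{Step 2 (duality).} Next I would identify $H_1(Q,B)^\vee\simeq H^1(Q,B^\vee)$. This is the standard isomorphism $\Hom_{\Fp}(H_i(Q,B),\Fp)\simeq H^i(Q,\Hom_{\Fp}(B,\Fp))$. To see it, take a free resolution $P_\bullet\to\Z$ of finitely generated $\Z[Q]$-modules. Then $\Hom_{\Fp}(P_\bullet\otimes_{Q}B,\Fp)\simeq\Hom_Q(P_\bullet,B^\vee)$, where $B^\vee$ carries the contragredient action $(q\varphi)(m)=\varphi(q^{-1}m)$ fixed earlier. Since $\Fp$-duality is exact, it commutes with taking homology. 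Combining with Step 1 gives \eqref{eq:kernel-generators}. The only real care needed in the whole proof is checking that the action conventions match in this step.

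\emph{Step 3 (the bound).} Since $Q$ is a $p$-group, the only simple $\Fp[Q]$-module is the trivial one. Hence $B^\vee$ has a composition series of length $b=\dim_{\Fp}B$ with all factors $\Fp$. For a short exact sequence $0\to M'\to M\to M''\to0$, the long exact cohomology sequence gives
\[
 h^1(Q,M)\le h^1(Q,M')+h^1(Q,M'').
\]
Inducting along the composition series then yields $h^1(Q,B^\vee)\le b\,h^1(Q)=bd$. Finally, dropping the nonpositive term $-\dim_{\Fp}B_Q$ from \eqref{eq:kernel-generators} gives $d_R(W)\le t+bd$.
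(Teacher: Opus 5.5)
Your proof is correct. Steps~1 and~2 are the paper's argument: the coinvariant sequence $0\to H_1(Q,B)\to W_Q\to\Fp^t\to B_Q\to0$, followed by $\dim_{\Fp}H_1(Q,B)=h^1(Q,B^\vee)$. The paper only asserts that duality; your check of the contragredient convention supplies the justification.

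The two proofs differ only in how they bound $h^1(Q,B^\vee)\le bd$. The paper counts cocycles: a $1$-cocycle is determined by its values on a minimal generating set of $Q$. By the Burnside basis theorem that set has $d=h^1(Q)$ elements, so $\dim_{\Fp}Z^1(Q,B^\vee)\le bd$. You instead use d\'evissage. You take a composition series of $B^\vee$, whose factors are all trivial because $Q$ is a $p$-group, and use exactness in the middle of the long exact cohomology sequence.

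The paper's version is a one-liner that needs no $\Fp[Q]$-module theory. It works for any finite group once $d$ is read as the minimal number of generators. Yours uses the fact that $\Fp$ is the only simple $\Fp[Q]$-module, but the same d\'evissage gives $h^i(Q,M)\le(\dim_{\Fp}M)\,h^i(Q)$ in every degree $i$, not only $i=1$.
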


\begin{proof}
Taking $Q$-coinvariants gives an exact sequence
\[
 0\longrightarrow H_1(Q,B)\longrightarrow W_Q
 \longrightarrow\Fp^t\longrightarrow B_Q\longrightarrow0.
\]
Moreover,
\[
 \dim_{\Fp}H_1(Q,B)=h^1(Q,B^\vee).
\]
This proves \eqref{eq:kernel-generators}.  A $1$-cocycle of $Q$ with
values in $B^\vee$ is determined by its values on a minimal set of
$d$ generators of $Q$.  Hence $h^1(Q,B^\vee)\leq bd$.
\end{proof}

\section{Ray extensions with a prescribed extension class}
\label{sec:ray}

\subsection{The ray class sequence}

Let $K$ be a number field with $p\nmid h(K)$, and let $S$ be a finite
set of finite primes of $K$, all away from $p$.  Suppose that the
$p$-Sylow subgroup of $(\cO_K/\mathfrak p)^\times$ has order $p$ for
every $\mathfrak p\in S$.  Put
\[
 U_K=\cO_K^\times/\cO_K^{\times p},
 \qquad
 V_S=\bigoplus_{\mathfrak p\in S}\Fp e_{\mathfrak p}.
\]
After choosing generators of the local $p$-Sylow subgroups, the ray
class sequence gives
\begin{equation}\label{eq:ray-sequence}
 U_K\xrightarrow{\lambda_S}V_S
 \longrightarrow\Cl_{\cM}(K)(p)\longrightarrow0,
 \qquad
 \cM=\prod_{\mathfrak p\in S}\mathfrak p.
\end{equation}
Here $\Cl_{\cM}(K)(p)$ denotes the $p$-Sylow subgroup of the ray class
group.  In particular, it is elementary abelian.

The following maximality criterion will be used after the local
decomposition groups have been prescribed.

\begin{lemma}\label{lem:ray-maximality}
Let $M/K$ be the full $p$-ray class field for a square-free modulus
$\cM$ supported at primes away from $p$.  Suppose that
\[
 B=\Gal(M/K)
\]
is elementary abelian, that every inertia group has order $p$, and that
\begin{equation}\label{eq:wedge-spanning-prelim}
 \sum_{\mathfrak p\mid\cM}
 \bigwedge^2D_{\mathfrak p}(M/K)=\bigwedge^2B.
\end{equation}
Then $p\nmid h(M)$.
\end{lemma}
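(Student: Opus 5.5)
The plan is to show that the Galois group of the maximal unramified $p$-extension of $M$ over $K$ is already abelian, hence equal to $B$. The wedge-spanning hypothesis \eqref{eq:wedge-spanning-prelim} will be used to kill every commutator, and the tameness of the ramification will make each decomposition group abelian.

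\emph{Setup.} Let $S$ be the set of primes dividing $\cM$, and put $H=\Gal(\Lp(M)/K)$ and $N=\Gal(\Lp(M)/M)$. Since $\Lp(M)$ is characteristic over $M$, the extension $\Lp(M)/K$ is Galois. It is a $p$-extension unramified outside $S$: it is unramified over $M$, and $M/K$ is unramified outside $S$. The infinite places cause no trouble because $p$ is odd. Suppose $N\neq1$. Since $H$ is pro-$p$ and $N$ is a nontrivial normal subgroup, Nakayama's lemma gives $N^p[N,H]\subsetneq N$. Set
\[
 \bar H=H/N^p[N,H],\qquad \bar N=N/N^p[N,H]\neq 1 .
\]
Then $\bar N$ is central in $\bar H$ and $\bar H/\bar N\simeq B$. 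Let $\tilde M$ be the fixed field of $N^p[N,H]$. Then $\tilde M/K$ is Galois with group $\bar H$, it is unramified outside $S$, and it is unramified over $M$.

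\emph{Local structure.} Fix $\mathfrak p\in S$ and a prime of $\tilde M$ above it, and let $T\subset D$ be the corresponding inertia and decomposition groups in $\bar H$. Since $\tilde M/M$ is unramified, $T$ injects into $\Gal(M/K)=B$, so $|T|=p$. As $\mathfrak p\nmid p$, the ramification is tame. Hence $T$ is cyclic, generated by some $\tau$, and $D$ is generated by $\tau$ together with a Frobenius lift $\sigma$ satisfying
\[
 \sigma\tau\sigma^{-1}=\tau^{\Nm\mathfrak p}.
\]
The hypothesis that the $p$-Sylow subgroup of the residue multiplicative group has order $p$ gives $\Nm\mathfrak p\equiv1\pmod p$, so $\tau^{\Nm\mathfrak p}=\tau$. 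Therefore $D$ is abelian, and its image in $B$ is $D_{\mathfrak p}(M/K)$.

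\emph{Killing commutators.} Because $\bar N$ is central and $B$ is elementary abelian, the commutator induces a well-defined alternating bilinear map
\[
 c:\textstyle\bigwedge^2B\to\bar N,\qquad x\wedge y\mapsto[\tilde x,\tilde y],
\]
where $\tilde x,\tilde y\in\bar H$ are any lifts of $x,y\in B$. Lifts of elements of $D_{\mathfrak p}(M/K)$ may be taken inside the abelian group $D$, so $c$ vanishes on $\bigwedge^2D_{\mathfrak p}(M/K)$ for every $\mathfrak p\in S$. By \eqref{eq:wedge-spanning-prelim}, $c\equiv0$, so $\bar H$ is abelian.

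\emph{Conclusion.} Now $\tilde M/K$ is an abelian $p$-extension unramified outside $S$, with tame ramification. Its conductor therefore divides the square-free modulus $\cM$, so $\tilde M$ lies in the $p$-ray class field $M$. This forces $\bar N=1$, a contradiction. Hence $N=1$, i.e.\ $\Lp(M)=M$, which means $p\nmid h(M)$.

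The main obstacle is the local step. One must make sure that the inertia group in $\bar H$ has order exactly $p$, which uses that it injects into $B$ because $\tilde M/M$ is unramified. One must also check that $\Nm\mathfrak p\equiv1\pmod p$ genuinely makes the tame decomposition group abelian. Once this is in place, the wedge hypothesis reduces everything to the abelian case, where the ray class field is maximal by definition. There is no need to handle the $p$-th power part of $\Phi(\bar H)$ separately, since abelianness alone already forces $\bar H$ to be a quotient of $B$.
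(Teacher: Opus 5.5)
Your proof is correct and follows the paper's argument. You build a central elementary abelian extension of $B$ inside the unramified $p$-tower of $M$ (the paper uses a single $C_p$-quotient of $\Cl(M)$ rather than $N/N^p[N,H]$). You show that the tame decomposition groups are abelian, so the commutator pairing $\bigwedge^2B\to\bar N$ vanishes by the wedge-spanning hypothesis, and then you contradict the maximality of the $p$-ray class field.

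One small correction: the congruence $\Nm\mathfrak p\equiv1\pmod p$ is not among the lemma's stated hypotheses. It still follows, because $M/K$ is abelian with tame inertia of order $p$. The paper avoids needing it by noting that a normal subgroup of order $p$ in a $p$-group is central.
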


\begin{proof}
Suppose that $\Cl(M)(p)\neq0$, and put
\[
 \overline C=\Cl(M)(p)/p\Cl(M)(p).
\]
The finite $p$-group $B$ acts on $\overline C$.  Since $\Fp[B]$ is
local, the coinvariants $\overline C_B$ of this nonzero module are
nonzero.  Choose a nonzero quotient
$\overline C_B\twoheadrightarrow\Fp$.  Equivalently, there is a
$B$-equivariant quotient $\overline C\twoheadrightarrow\Fp$, where
$B$ acts trivially on the target.  By class field theory the kernel
corresponds to an extension $H/M$ such that
\[
 [H:M]=p,
 \qquad H/M\text{ is everywhere unramified}.
\]
The $B$-stability of the kernel implies that $H/K$ is Galois, and the
trivial action on the quotient gives an exact sequence
\[
 1\longrightarrow Z\longrightarrow P\longrightarrow B
 \longrightarrow1,
 \qquad Z\simeq C_p,\quad Z\subseteq Z(P).
\]
Since $B$ is elementary abelian and $Z$ is central, commutators define
an alternating $\Fp$-linear map
\[
 \beta:\bigwedge^2B\longrightarrow Z.
\]

Fix a prime $\mathfrak p\mid\cM$ and a prime of $H$ above it.  Since
$H/M$ is unramified, the inertia subgroup of the corresponding local
decomposition group $\widetilde D_{\mathfrak p}\subseteq P$ still has
order $p$.  It is normal in the $p$-group
$\widetilde D_{\mathfrak p}$ and hence central, because
$|\operatorname{Aut}(C_p)|=p-1$.  The quotient by inertia is cyclic.
It follows that $\widetilde D_{\mathfrak p}$ is abelian.  Therefore
$\beta$ vanishes on
$\bigwedge^2D_{\mathfrak p}(M/K)$.  By
\eqref{eq:wedge-spanning-prelim}, $\beta=0$, and $P$ is abelian.

The extension $H/K$ is unramified outside $\cM$.  At a prime dividing
$\cM$ its inertia group has order $p$ and the ramification is tame, so
the local conductor exponent is one.  Hence the conductor of $H/K$
divides $\cM$.  Since $H/K$ is an abelian $p$-extension, it must be
contained in the full $p$-ray class field $M$, a contradiction.
\end{proof}

\subsection{The extension class of the ray field}

We now compare the cohomology class chosen in
Proposition~\ref{prop:unit-map} with the class of the actual Galois
group of a ray extension.

\begin{proposition}\label{prop:ray-extension-class}
Retain the assumptions and notation of
Proposition~\ref{prop:unit-map}.  Suppose that $t$ prime orbits as in
Lemma~\ref{lem:equivariant-chebotarev} have been chosen so that the
global unit residue map is
\[
 \lambda_S=i\circ f:U\longrightarrow V=R^t.
\]
Let $\cM$ be the product of all primes of $K$ in these orbits, and let
$M/K$ be the full $p$-ray class field of modulus $\cM$.  Then
\[
 \Gal(M/K)\simeq B
\]
as a marked $Q$-module.  Moreover, $M/k$ is Galois and the extension
\[
 1\longrightarrow B\longrightarrow\Gal(M/k)
 \longrightarrow Q\longrightarrow1
\]
has class $\alpha\in H^2(Q,B)$.
\end{proposition}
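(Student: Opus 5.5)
The plan has three parts: identify $\Gal(M/K)$ with $B$ from the ray class sequence, check that $M/k$ is Galois, and compute the extension class by comparing with the fundamental class through the id\`ele sequence \eqref{eq:idele-units}.

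\emph{Identifying $\Gal(M/K)$.} Since $p\nmid h(K)$ and the chosen primes satisfy $v_p(\Nm v-1)=1$ (Lemma~\ref{lem:equivariant-chebotarev}), sequence \eqref{eq:ray-sequence} applies. It gives $\Cl_{\cM}(K)(p)\simeq V_S/\lambda_S(U_K)$. The units $U_K$ and $U=E_{K,p}/p$ agree. Here $V_S=R^t$ as $R$-modules, because each orbit is a free $Q$-set (the primes split completely) and the local generators were transported along the orbit. Since $\lambda_S=i\circ f$ with $f$ surjective by Proposition~\ref{prop:unit-map}, the image is $i(W)$. Hence $\Gal(M/K)\simeq V/i(W)\simeq B$ via $\partial$, and this identification is $Q$-equivariant. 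The modulus $\cM$ is $Q$-stable, so its ray class field is Galois over $k$. We therefore get the extension $1\to B\to\Gal(M/k)\to Q\to1$, and the conjugation action matches the $R$-module structure on $B$ through the Artin map.

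\emph{Computing the extension class.} Class field theory (Weil group / Tate) says that the extension $\Gal(M/k)$ of $Q$ by $\Gal(M/K)=C_K/N$ has class equal to the image of the fundamental class $u_{K/k}\in H^2(Q,C_K)$ under the Artin surjection $C_K\to\Gal(M/K)$; see the standard group-extension theorem in \cite{MilneCFT}. Since $B$ is a $p$-group, this map factors through $\cC$. So I must show that the image of $u_{K/k}$ under $\rho:\cC\to B$ equals $\alpha$.

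For this, build a map of short exact sequences from \eqref{eq:idele-units} to \eqref{eq:WBpresentation}, tensored down mod $p$ where needed. The map $\cJ\to V$ is the projection onto the local $p$-Sylow quotients at the primes of $S$, i.e.\ the residue symbols; the remaining unit components are killed by the ray modulus. On units this map is $\lambda_S=i\circ f$, and on the quotient it is $\rho$. Naturality of connecting maps then gives $\delta_\partial(\rho_*u_{K/k})=f_*\delta(u_{K/k})$, which reduces mod $p$ to $f_*\overline\theta_{K/k}$. By \eqref{eq:class-equation} this equals $\delta_\partial(\alpha)$. Since $\delta_\partial$ is an isomorphism, $\rho_*u_{K/k}=\alpha$. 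For $Q=1$ there is nothing to prove.

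\emph{Main obstacle.} The hardest step is verifying that the diagram commutes with consistent signs and normalizations. The Artin map on $J_K^u$ must agree with the chosen residue-symbol coordinates, including the choice of local generators and the exponent convention. Any sign or scalar discrepancy rescales the class, so I would track it and absorb it into the choice of generators, just as scalars were absorbed in Lemma~\ref{lem:equivariant-chebotarev}. I also need the sign convention linking the fundamental class to the extension class. To be safe, I would state equality up to a fixed unit, and then note that replacing $\alpha$ by a unit multiple beforehand in Proposition~\ref{prop:unit-map} removes the ambiguity.
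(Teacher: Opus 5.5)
Your proposal is correct and follows the paper's proof essentially step for step. The ray class sequence with $\lambda_S=i\circ f$ identifies $\Gal(M/K)$ with $V/i(W)\simeq B$, and $Q$-stability of $\cM$ makes $M/k$ Galois. The morphism from \eqref{eq:idele-units} to \eqref{eq:WBpresentation}, combined with the Shafarevich--Weil theorem and injectivity of $\delta_\partial$, then yields the class $\alpha$; the paper simply writes out at cochain level your naturality-of-connecting-maps step.

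Your worry about scalars is settled the way the paper settles it: fix the reciprocity convention and mark $B$ through the local Artin maps on unit id\`eles. Then the diagram commutes exactly, and you need not rescale $\alpha$, which would in fact change the hypothesis on $f$.
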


\begin{proof}
Every selected prime has local multiplicative group with $p$-Sylow
subgroup $C_p$.  Since $p\nmid h(K)$, the $p$-part of the ray class
sequence is
\[
 U\xrightarrow{i f}V\longrightarrow\Cl_{\cM}(K)(p)
 \longrightarrow0.
\]
The map $f$ is onto $W$, so
\[
 \Cl_{\cM}(K)(p)=V/i(W)\simeq B
\]
under the marking induced by $\partial$.  The modulus is $Q$-stable.
Consequently its full $p$-ray class field is Galois over $k$, and the
conjugation action on its Galois group is the prescribed action on
$B$.

It remains to determine the extension class.  We use the reciprocity
convention for which the Shafarevich--Weil theorem sends the global
fundamental class to the class of the Galois group extension.  Local
reciprocity gives a commutative diagram
\begin{equation}\label{eq:idele-ray-diagram}
\begin{array}{ccccccccc}
0&\longrightarrow&E_{K,p}&\longrightarrow&\cJ&\longrightarrow&\cC
&\longrightarrow&0\\
&&\mathllap{\bmod p\ }\downarrow f&&\downarrow\rho
&&\downarrow\operatorname{Art}_M&&\\
0&\longrightarrow&W&\xrightarrow{i}&V&\xrightarrow{\partial}&B
&\longrightarrow&0.
\end{array}
\end{equation}
The middle vertical map is the product of the local residue maps at
the primes dividing $\cM$; outside $\cM$ local units map to zero.  Its
restriction to global units is $i f$, so the left square commutes.  The
right square is global reciprocity.

Let $c\in Z^2(Q,\cC)$ represent the global fundamental class
$u_{K/k}$ and choose a $2$-cochain $j$ with values in $\cJ$ which maps
to $c$.  Then $dj$ takes values in $E_{K,p}$ and represents
$\theta_{K/k}$.  Put $v=\rho(j)$.  Diagram
\eqref{eq:idele-ray-diagram} gives
\begin{equation}\label{eq:cochain-naturality}
 \partial v=\operatorname{Art}_M(c),
 \qquad
 dv=i f(\overline{dj}).
\end{equation}

By the Shafarevich--Weil theorem, the cocycle
$\operatorname{Art}_M(c)$ represents the class
$\alpha_M\in H^2(Q,B)$ of $\Gal(M/k)$; see
\cite[p.~115]{Fesenko}.  The definition of the connecting
homomorphism and \eqref{eq:cochain-naturality} now give
\[
 \delta_\partial(\alpha_M)
 =f_*\overline\theta_{K/k}
 =\delta_\partial(\alpha).
\]
Since $\delta_\partial$ is an isomorphism,
$\alpha_M=\alpha$.
\end{proof}

\section{The main construction}\label{sec:batch}

We now combine the preceding ingredients.  The $Q$-action on the
elementary abelian kernel in the following proposition is arbitrary.

\begin{proposition}[Main batch proposition]\label{prop:batch}
Let $k$ be a totally real number field in which $p$ is unramified.  Suppose that $K=\Lp(k)$ is finite over $k$, and put
\[
 Q=\Gal(K/k),\qquad R=\Fp[Q].
\]
Let
\begin{equation}\label{eq:target-extension}
 1\longrightarrow A\longrightarrow\Gamma
 \longrightarrow Q\longrightarrow1
\end{equation}
be an extension in which $A$ is an elementary abelian $p$-group of
dimension $a\geq1$, with its given $Q$-action.  Set
\[
 B=A\oplus\Fp e,\qquad b=a+1,\qquad
 t=b+\binom b2=\binom{a+2}{2},
\]
where $Q$ acts trivially on $e$.  If the free $R$-rank $\lambda$ of
$\cO_K^\times/\cO_K^{\times p}$ satisfies
\begin{equation}\label{eq:batch-exact-rank}
 \lambda\geq
 t-\dim_{\Fp}B_Q+h^1(Q,B^\vee),
\end{equation}
then there are a cyclic extension $D/k$ of degree $p$ and a field $M$
such that
\[
 M=\Lp(D),
 \qquad
 \Gal(M/D)\simeq\Gamma,
 \qquad
 p\nmid h(M).
\]
The fields $D$ and $M$ are totally real, and $p$ is unramified in
both $D/\Q$ and $M/\Q$.

A sufficient condition for \eqref{eq:batch-exact-rank} is
\begin{equation}\label{eq:batch-simple-rank}
 \lambda\geq t+bd,
 \qquad d=h^1(Q).
\end{equation}
\end{proposition}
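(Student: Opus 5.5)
Let $\Gamma'$ be the extension of $Q$ by $B=A\oplus\Fp e$ with class $\alpha=(\alpha_\Gamma,0)\in H^2(Q,A)\oplus H^2(Q,\Fp)$, where $\alpha_\Gamma$ is the class of \eqref{eq:target-extension}. Thus $\Gamma'\simeq\Gamma\times C_p$. The plan is to realize $\Gamma'$ as $\Gal(M/k)$ for a $p$-ray class field $M$ of $K$, with $p\nmid h(M)$. Then $D$ will be the fixed field of $\Gamma\times1$, a cyclic degree-$p$ extension of $k$ with $\Gal(M/D)\simeq\Gamma$.

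\textbf{Choice of presentation.} Take $V=R^t$ with basis indexed by the $b$ basis vectors $e_1,\dots,e_b$ of $B$ (with $e_b=e$) and the $\binom b2$ pairs $\{j<l\}$. Define $\partial:V\to B$ $R$-linearly by sending the generator attached to $e_j$ to $e_j$, and the generator attached to $\{j,l\}$ to $e_j+e_l$. This is surjective, and we set $W=\ker\partial$. The intent is that the primes in the orbit attached to $\{j,l\}$ will have decomposition group in $B$ spanned by $e_j$ (inertia) and $e_l$ (Frobenius), or at least a $2$-dimensional group containing both. Then $\bigwedge^2 B$ is spanned by the local $\bigwedge^2D_{\mathfrak p}$, which is exactly the hypothesis \eqref{eq:wedge-spanning-prelim} of Lemma~\ref{lem:ray-maximality}. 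The inequality \eqref{eq:batch-exact-rank} is exactly \eqref{eq:lambda-exact-condition}, so Proposition~\ref{prop:unit-map} gives a surjection $f:U\to W$ with $f_*\overline\theta_{K/k}=\delta_\partial(\alpha)$. Lemma~\ref{lem:kernel-generators} gives $d_R(W)\le t+bd$, which yields the sufficiency of \eqref{eq:batch-simple-rank}.

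\textbf{Choosing primes.} The $t$ orbits are chosen by Lemma~\ref{lem:equivariant-chebotarev}, so that the unit residue map is $i\circ f$. Its hypotheses hold because $p$ is unramified in $K/\Q$ ($K/k$ is unramified) and $\mu_p\not\subset K$ ($K$ is totally real, $p$ odd). Proposition~\ref{prop:ray-extension-class} then gives $\Gal(M/K)\simeq B$ with extension class $\alpha$, so $\Gal(M/k)\simeq\Gamma'$. Each chosen prime has inertia of order $p$ generated by the image of its basis vector, since $\partial$ sends $e_{\mathfrak p}$ to the inertia generator. Frobenius in $M/K$, however, depends on the residue symbols of the \emph{other} chosen primes, which is the role of Remark~\ref{rem:expanded-radical}. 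I would proceed sequentially. First choose the $b$ ``singleton'' orbits. Then, for each pair orbit $\{j,l\}$, prescribe via the equivariant Chebotarev lemma the residue map on the enlarged module $Z$ (units plus Kummer classes supported on previously chosen orbits), so that its Frobenius in the ray field has a component along $e_j$ or $e_l$ independent of inertia. The difficulty is that the ray class field depends on all primes simultaneously, including those chosen later. I expect the main obstacle to be controlling this reciprocal dependence (Frobenius of early primes in the final $M$). I would handle it by ordering the choices so that the needed Frobenius conditions are imposed on the later primes only, and by using the inertia of the earlier primes as the other generator of each local $\bigwedge^2$. For example, a pair orbit prime $\mathfrak p$ has inertia $e_j+e_l$, and its Frobenius is forced, through Kummer classes of singleton primes, to have a nonzero $e_j$ component modulo $e_j+e_l$.

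\textbf{Conclusion.} Lemma~\ref{lem:ray-maximality} then gives $p\nmid h(M)$. Since $M/D$ is unramified (inertia in $\Gamma'$ of primes above $\cM$ lies in $1\times C_p$ after arranging inertia generators to project nontrivially to $e$; otherwise replace $\Gamma\times1$ by a suitable complement), and $p\nmid h(M)$, we get $M=\Lp(D)$. Arranging the inertia generators this way is a constraint on the construction that I would build into the choice of $\partial$, for instance by adding $e$ to each generator's image. Total reality holds because the infinite places split, $p$ odd. The prime $p$ stays unramified since the modulus avoids $p$.
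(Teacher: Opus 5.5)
\textbf{The mechanism for controlling Frobenius runs the wrong way.} Your overall architecture is the paper's: realize $\Gamma\times C_p$ as $\Gal(M/k)$ for a $p$-ray class field of $K$, force $\sum_{\mathfrak p}\bigwedge^2D_{\mathfrak p}=\bigwedge^2B$, and take $D=M^{\Gamma\times1}$. The step that should produce two-dimensional decomposition groups fails as you describe it.

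Write $h=h(K)$ and $\mathfrak q^h=(\pi_{\mathfrak q})$, and let $\sigma_{\mathfrak w}\in B$ be the inertia generator at $\mathfrak w\in S$. Global reciprocity for the principal id\`ele $\pi_{\mathfrak q}$ gives
\[
 h\,\overline{\Frob}_{\mathfrak q}
 +\sum_{\mathfrak w\in S,\ \mathfrak w\neq\mathfrak q}
 \lambda_{\mathfrak w}(\pi_{\mathfrak q})\,\sigma_{\mathfrak w}=0
 \quad\text{in } B/\Fp\sigma_{\mathfrak q}.
\]
Choosing a new prime $\mathfrak p$ by Lemma~\ref{lem:equivariant-chebotarev} on the enlarged module $Z$ prescribes $\lambda_{\mathfrak p}(\pi_{\mathfrak q})$ for \emph{earlier} primes $\mathfrak q$. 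By this relation, that fixes $\overline{\Frob}_{\mathfrak q}$, not $\overline{\Frob}_{\mathfrak p}$.

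The Frobenius of the new prime $\mathfrak p$ is governed by $\lambda_{\mathfrak w}(\pi_{\mathfrak p})$. These are the symbols of its own Kummer class at the earlier primes and at its conjugates $g\mathfrak p$, which are chosen simultaneously with $\mathfrak p$. The lemma does not touch them, and since $\mu_p\not\subset K$ no reciprocity law ties $\lambda_{\mathfrak p}(\pi_{\mathfrak q})$ to $\lambda_{\mathfrak q}(\pi_{\mathfrak p})$.

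So ``imposing the Frobenius conditions on the later primes'' is exactly what cannot be done. Your prescriptions (pair-prime symbols on singleton Kummer classes) control only the $b$ singleton Frobenii. Their wedges span at most an $R$-submodule of $\bigwedge^2B$ on $b$ generators, which is too small in general (e.g.\ $Q=1$, $b\geq4$). The pair orbits get no guaranteed second generator. Inertia of earlier primes cannot serve as ``the other generator'' either, since $D_{\mathfrak p}=\langle\sigma_{\mathfrak p},\Frob_{\mathfrak p}\rangle$. Hence the hypothesis of Lemma~\ref{lem:ray-maximality} is not established, and neither $p\nmid h(M)$ nor $M=\Lp(D)$ follows.

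\textbf{The missing idea is to reverse the order.} In any ordering, the last-chosen orbit has no later primes to fix its Frobenius. The paper therefore proceeds as follows.
\begin{enumerate}
\item Choose the $\binom b2$ orbits whose decomposition groups must be planes (your pair orbits, the paper's geometry orbits) \emph{first}, and fix $\pi_{\mathfrak p}$ for one selected prime in each.
\item Record the already-determined contribution $\gamma_{\mathfrak p}$ of the symbols of $\pi_{\mathfrak p}$ at all geometry primes other than $\mathfrak p$, its conjugates included.
\item Choose $b$ controller orbits \emph{last}, whose images form an $\Fp$-basis of $B$ and whose own Frobenius is irrelevant. Because $Z=U\oplus\bigoplus R[\pi_{\mathfrak p}]$ is direct, their symbols on each $\pi_{\mathfrak p}$ (even scalar ones) can be prescribed independently. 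Choose them so that the relation above forces $\overline{\Frob}_{\mathfrak p}$ to equal the desired second generator.
\end{enumerate}
Your presentation works verbatim after this reordering: pair inertia $c_j+c_l$, with target $\overline{\Frob}=c_j$.

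\textbf{Transversality.} Giving every image a nonzero $e$-coordinate is the right fix. This is the paper's basis $c_1=e$, $c_{j+1}=e+a_j$; note that $c_j+c_l$ has $e$-coordinate $2\neq0$ since $p$ is odd. Switching to another complement of $1\times C_p$ does not help in general. Such complements are graphs of homomorphisms $\Gamma\to\Fp$, so they meet $B$ in exactly $A$ whenever $A\subseteq\Phi(\Gamma)$, which is the situation in the Frattini induction.
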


\begin{proof}
Let $P=\Gamma\times C_p$ and identify the kernel of $P\to Q$ with
\[
 B=A\oplus\Fp e.
\]
Write
\[
 \alpha\in H^2(Q,B)
\]
for the marked extension class of $P$.  Choose an $\Fp$-basis
$a_1,\ldots,a_a$ of $A$ and put
\begin{equation}\label{eq:transverse-basis}
 c_1=e,
 \qquad
 c_{j+1}=e+a_j\quad(1\leq j\leq a).
\end{equation}
Then $c_1,\ldots,c_b$ is an $\Fp$-basis of $B$.  Moreover, for every
$g\in Q$ and every $j$,
\begin{equation}\label{eq:transversality-lines}
 \Fp(gc_j)\cap A=0,
\end{equation}
because the $e$-coordinate of $gc_j$ is one.

Let
\[
 V=
 \bigoplus_{j=1}^bRx_j
 \ \oplus\!
 \bigoplus_{1\leq i<j\leq b}Ry_{ij}\simeq R^t.
\]
We call the first summands controller coordinates and the second
summands geometry coordinates.  Define an $R$-linear map
\begin{equation}\label{eq:partial-columns}
 \partial:V\longrightarrow B,
 \qquad
 \partial(x_j)=c_j,
 \qquad
 \partial(y_{ij})=c_j.
\end{equation}
The controller columns contain an $\Fp$-basis of $B$, so $\partial$ is
surjective.  Put $W=\ker\partial$.  The rank hypothesis and
Proposition~\ref{prop:unit-map} give a surjection
\[
 f:U\longrightarrow W,
 \qquad U=\cO_K^\times/\cO_K^{\times p},
\]
such that
\begin{equation}\label{eq:batch-class-equation}
 f_*\overline\theta_{K/k}=\delta_\partial(\alpha)
\end{equation}
when $Q\neq1$.  For $Q=1$ the equation is vacuous.  Set
\[
 \Lambda=i\circ f:U\longrightarrow V.
\]

We first choose the geometry primes.  For every $i<j$, apply
Lemma~\ref{lem:equivariant-chebotarev} to the $y_{ij}$-coordinate of
$\Lambda$.  This gives a prime $v_{ij}$ of $k$, split completely in
$K(\mu_p)$, and a selected prime $\mathfrak v_{ij}$ of $K$ above it.
All these primes are distinct, and
\[
 v_p(\Nm v_{ij}-1)=1.
\]

Let $h=h(K)$.  Since $p\nmid h$, for every selected geometry prime
choose $\pi_{ij}\in K^\times$ such that
\begin{equation}\label{eq:pi-principal}
 \mathfrak v_{ij}^{\,h}=(\pi_{ij}).
\end{equation}
Inside $K^\times/K^{\times p}$ we have a direct sum of $R$-modules
\begin{equation}\label{eq:expanded-radical}
 Z=U\ \oplus\!
 \bigoplus_{1\leq i<j\leq b}R[\pi_{ij}].
\end{equation}
Indeed, take the valuation at each prime in the $Q$-orbit of
$\mathfrak v_{ij}$.  A relation in \eqref{eq:expanded-radical} makes
$h$ times every coefficient zero modulo $p$.  Since $p\nmid h$, all
coefficients vanish.  The same calculation shows that every
$R[\pi_{ij}]$ is free of rank one.

We next choose the controller primes.  Let $S_{\rm geom}$ be the
set of all primes of $K$ in the geometry orbits chosen above.  The
choice of a local generator at each such prime determines a local
residue-symbol coordinate on $K^\times/K^{\times p}$.  Via the
identification of the geometry orbits with the corresponding summands
$Ry_{kl}\subset V$ and the map $\partial:V\to B$, each local symbol
therefore has a well-defined image in $B$.

Fix $\mathfrak v=\mathfrak v_{ij}$.  Define
$\gamma_{\mathfrak v}\in B$ to be the sum of the images of the local
residue symbols of $\pi_{ij}$ at all primes
$\mathfrak w\in S_{\rm geom}$ with
$\mathfrak w\neq\mathfrak v$.  Thus the contribution from the
selected prime $\mathfrak v$ itself is omitted; the other primes in
its $Q$-orbit are included.  This element is determined entirely by
the geometry primes, their chosen local generators, and $\pi_{ij}$,
all of which have already been fixed.

For every controller index $r$, extend the $x_r$-coordinate of
$\Lambda$ from $U$ to an $R$-linear map
\[
 L_r:Z\longrightarrow R.
\]
On each generator $\pi_{ij}$ choose the values so that
\begin{equation}\label{eq:controller-equation}
 \sum_{r=1}^bL_r(\pi_{ij})c_r
 =-h c_i-\gamma_{\mathfrak v_{ij}}
 \qquad(1\leq i<j\leq b).
\end{equation}
These equations are soluble because the $c_r$ form an $\Fp$-basis of
$B$; the values $L_r(\pi_{ij})$ may even be chosen in the scalar copy
of $\Fp$ in $R$.  Directness in \eqref{eq:expanded-radical} shows that
the choices for distinct $\pi_{ij}$ are independent.

Apply Lemma~\ref{lem:equivariant-chebotarev} to the maps $L_r$.  We
obtain distinct controller primes $q_1,\ldots,q_b$ of $k$, split
completely in $K(\mu_p)$, with selected primes of $K$ above them, such
that
\[
 v_p(\Nm q_r-1)=1.
\]
Their restrictions to $U$ are the controller coordinates of
$\Lambda$, while their values on the $\pi_{ij}$ satisfy
\eqref{eq:controller-equation}.

Let $S$ be the set of all primes of $K$ above the $t$ chosen primes of
$k$, and let
\[
 \cM=\prod_{\mathfrak p\in S}\mathfrak p.
\]
The global unit residue map is exactly $\Lambda=i f$.  Let $M/K$ be
the full $p$-ray class field of modulus $\cM$.  By
Proposition~\ref{prop:ray-extension-class},
\begin{equation}\label{eq:batch-group-class}
 \Gal(M/K)\simeq B,
 \qquad
 [\Gal(M/k)]=\alpha\in H^2(Q,B).
\end{equation}
Thus, as a marked extension of $Q$ by $B$,
\begin{equation}\label{eq:P-identification}
 \Gal(M/k)\simeq P=\Gamma\times C_p.
\end{equation}

We now determine the local decomposition groups.  At the selected
prime $\mathfrak v_{ij}$, the inertia line is $\Fp c_j$.  Let
\[
 \overline{\Frob}_{\mathfrak v_{ij}}
 \in B/\Fp c_j
\]
be the Frobenius in the quotient by inertia.  Apply global reciprocity
to \eqref{eq:pi-principal}.  The local unit contribution at
$\mathfrak v_{ij}$ lies in the inertia line and hence disappears after
passing to $B/\Fp c_j$.  Modulo that inertia line, global reciprocity
gives
\[
 h\overline{\Frob}_{\mathfrak v_{ij}}
 +\gamma_{\mathfrak v_{ij}}
 +\sum_{r=1}^bL_r(\pi_{ij})c_r=0.
\]
Using \eqref{eq:controller-equation} and $p\nmid h$, we obtain
\begin{equation}\label{eq:geometry-frobenius}
 \overline{\Frob}_{\mathfrak v_{ij}}
 =c_i\pmod{\Fp c_j}.
\end{equation}
Consequently,
\begin{equation}\label{eq:geometry-plane}
 D_{\mathfrak v_{ij}}(M/K)=\langle c_i,c_j\rangle.
\end{equation}
The planes in \eqref{eq:geometry-plane} give
\begin{equation}\label{eq:wedge-spanning}
 \sum_{\mathfrak p\mid\cM}
 \bigwedge^2D_{\mathfrak p}(M/K)=\bigwedge^2B.
\end{equation}
Lemma~\ref{lem:ray-maximality} now shows that
\begin{equation}\label{eq:class-number-M}
 p\nmid h(M).
\end{equation}

Under \eqref{eq:P-identification}, regard $\Gamma$ as
$\Gamma\times1$.  Then $\Gamma\cap B=A$.  The inertia lines over a
selected prime are the lines $\Fp(gc_j)$, for $g\in Q$.  By
\eqref{eq:transversality-lines}, each of them has trivial intersection
with $\Gamma$.  Let
\[
 D=M^\Gamma.
\]
It follows that $D/k$ is cyclic of degree $p$ and $M/D$ is everywhere
unramified, with
\[
 \Gal(M/D)\simeq\Gamma.
\]
If $\Lp(D)$ were larger than $M$, the nontrivial pro-$p$ group
$\Gal(\Lp(D)/M)$ would have a quotient of order $p$.  This would give
an unramified $C_p$-extension of $M$, contradicting
\eqref{eq:class-number-M}.  Hence $M=\Lp(D)$.

All ramification introduced in the construction is finite and away
from $p$.  Moreover, $M/k$ is a Galois extension of odd degree.  Since
$k$ is totally real, $M$ and its subfield $D$ are totally real.  They
remain unramified at the primes above $p$.

Finally, \eqref{eq:batch-simple-rank} implies
\eqref{eq:batch-exact-rank} by
Lemma~\ref{lem:kernel-generators}.
\end{proof}

\section{Proof of the main theorem}
\label{sec:induction}

We first construct a starting field with many units and trivial
$p$-class group.

\begin{lemma}\label{lem:starting-field}
For every integer $r\geq0$, there is a totally real cyclic extension
$k_0/\Q$ of degree $p^r$ which is unramified at $p$ and satisfies
$p\nmid h(k_0)$.
\end{lemma}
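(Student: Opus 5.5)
The plan is to take $k_0$ inside a cyclotomic field of prime conductor $\ell\neq p$ and to show $p\nmid h(k_0)$ by a group-theoretic argument on the maximal unramified $p$-extension. For $r=0$ take $k_0=\Q$. For $r\geq1$, Dirichlet's theorem gives a prime $\ell\equiv1\pmod{p^r}$, and we let $k_0$ be the unique subfield of $\Q(\mu_\ell)$ of degree $p^r$ over $\Q$.

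First, the easy properties. $k_0/\Q$ is cyclic of degree $p^r$, since $\Gal(\Q(\mu_\ell)/\Q)$ is cyclic. It is unramified outside $\ell$, in particular at $p$, and it is totally ramified at $\ell$. It is totally real: complex conjugation has order dividing $2$, and its image in a cyclic group of odd order $p^r$ is trivial.

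For the class number, let $L=\Lp(k_0)$. Since $k_0/\Q$ is Galois, $L/\Q$ is Galois, and $G=\Gal(L/\Q)$ is a pro-$p$ group. Because $\Q$ has no nontrivial unramified extensions, the closed normal subgroup generated by all inertia groups is $G$. The only ramified finite prime is $\ell$, and the infinite place is unramified since $G$ has odd order. The inertia groups above $\ell$ are all conjugate, so $G$ is generated by the conjugates of one inertia group $I$. All these conjugates have the same image in the Frattini quotient $G/\PhiG(G)$, so $I$ surjects onto $G/\PhiG(G)$, and by the Burnside basis theorem $I=G$.

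Next, $L/k_0$ is unramified, so $I\cap\Gal(L/k_0)=1$. Hence $G=I$ maps isomorphically onto $\Gal(k_0/\Q)$, which forces $L=k_0$. Therefore the $p$-class group of $k_0$ is trivial, i.e.\ $p\nmid h(k_0)$. One can also phrase this via genus theory (Iwasawa's theorem for $p$-extensions of $\Q$ ramified at a single prime).

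The main point needing care is the possibly infinite pro-$p$ group $G$. The Frattini/Burnside argument works verbatim for pro-$p$ groups. Alternatively, one avoids it by applying the argument to the Hilbert $p$-class field instead, using finite $p$-groups only: if $p\mid h(k_0)$, the same argument applied to a degree-$p$ unramified extension of $k_0$ that is Galois over $\Q$ gives a contradiction. Such an extension is obtained from a $\Gal(k_0/\Q)$-stable index-$p$ subgroup of the class group, which exists because $\Fp[\Gal(k_0/\Q)]$ is local.
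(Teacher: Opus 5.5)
Your proof is correct. The construction of $k_0$ is the same as the paper's. The paper asks for $v_p(\ell-1)=r$ exactly, but your condition $\ell\equiv1\pmod{p^r}$ is all the argument uses.

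The difference is in how $p\nmid h(k_0)$ is proved. The paper climbs the chain of degree-$p$ subfields $\Q=F_0\subset F_1\subset\cdots\subset F_r=k_0$. At each step exactly one tame prime, the one above $\ell$, ramifies. It then applies Fact~3 of Hajir--Maire--Ramakrishna inductively, in the form: a cyclic degree-$p$ extension of a field with class number prime to $p$, ramified at exactly one tame prime, again has class number prime to $p$.

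You argue instead in one step on $G=\Gal(\Lp(k_0)/\Q)$:
\begin{itemize}
\item Minkowski's theorem makes $G$ the normal closure of a single inertia group $I$ above $\ell$.
\item Since the Frattini quotient is abelian, $I$ surjects onto it, and the Burnside basis theorem gives $I=G$.
\item Finally $I\cap\Gal(\Lp(k_0)/k_0)=1$, so $\Gal(\Lp(k_0)/k_0)$ is trivial.
\end{itemize}

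Your route is self-contained: it needs only Minkowski and Burnside, no intermediate fields, no cited genus formula, and no tameness. It also proves a more general fact for $p$ odd. If a Galois $p$-extension of a base with $p$-class number one has only one ramified prime, then its $p$-class number is one, and total ramification comes out for free. The paper's route is shorter on the page only because it delegates this step to a citation.

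Your finite-group variant also works and avoids any pro-$p$ bookkeeping. There you use a $\Gal(k_0/\Q)$-stable index-$p$ subgroup of the class group, which exists because $\Fp[\Gal(k_0/\Q)]$ is local.
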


\begin{proof}
For $r=0$, take $k_0=\Q$.  Suppose that $r\geq1$.  By Dirichlet's
theorem, choose a rational prime $\ell$ such that
\[
 v_p(\ell-1)=r.
\]
Let $k_0$ be the unique subfield of degree $p^r$ in
$\Q(\mu_\ell)$.  Since $p^r$ is odd, complex conjugation fixes $k_0$,
so $k_0$ is totally real.  The prime $\ell$ is the only ramified
rational prime and is totally ramified; in particular, $k_0$ is
unramified at $p$.

Consider the chain of degree-$p$ subextensions from $\Q$ to $k_0$.
At every step exactly one finite tame prime ramifies.  Fact~3 of
\cite{HMR-Ozaki} shows inductively that the class number at every level
is prime to $p$.  In particular, $p\nmid h(k_0)$.
\end{proof}

\begin{proof}[Proof of Theorem~\ref{thm:main}]
Put
\[
 C_n=\binom{n+2}{2}+1,
 \qquad
 r=\left\lceil\log_p C_n\right\rceil.
\]
Choose the field $k_0$ of Lemma~\ref{lem:starting-field}.  Then
\begin{equation}\label{eq:k0-degree}
 [k_0:\Q]=p^r\geq C_n,
 \qquad p\nmid h(k_0).
\end{equation}

Let
\[
 G=\PhiG^0(G)\supset\PhiG^1(G)\supset\cdots
 \supset\PhiG^\ell(G)=1,
 \qquad \ell=\ell_\Phi(G),
\]
and define
\[
 Q_i=G/\PhiG^i(G)
 \qquad(0\leq i\leq\ell).
\]
Thus $Q_0=1$ and $Q_\ell=G$.  For $0\leq i<\ell$, there is an exact
sequence
\begin{equation}\label{eq:Frattini-layer}
 1\longrightarrow A_i\longrightarrow Q_{i+1}
 \longrightarrow Q_i\longrightarrow1,
 \qquad
 A_i=\PhiG^i(G)/\PhiG^{i+1}(G).
\end{equation}
The kernel $A_i$ is elementary abelian, with the conjugation action of
$Q_i$.

We construct fields $k_i$ inductively so that
\begin{equation}\label{eq:induction-invariant}
 \Gal(\Lp(k_i)/k_i)\simeq Q_i,
\end{equation}
$k_i$ is totally real and unramified at $p$, and
\begin{equation}\label{eq:induction-degree}
 [k_i:\Q]=p^{r+i}.
\end{equation}
For $i=0$, these assertions follow from
\eqref{eq:k0-degree}, since $p\nmid h(k_0)$ implies
$\Lp(k_0)=k_0$.

Suppose that $k_i$ has been constructed, and put
\[
 K_i=\Lp(k_i),
 \qquad Q=Q_i,
 \qquad |Q|=p^q,
 \qquad a=\dim_{\Fp}A_i.
\]
Then $p\nmid h(K_i)$.  Let $d=h^1(Q)$ and let $\lambda_i$ be the free
$\Fp[Q]$-rank of
$\cO_{K_i}^\times/\cO_{K_i}^{\times p}$.  Since $k_i$ is totally real,
Lemma~\ref{lem:Minkowski-rank} gives
\begin{equation}\label{eq:lambda-stage}
 \lambda_i=[k_i:\Q]-1+d-h^2(Q).
\end{equation}

We verify the sufficient rank condition
\eqref{eq:batch-simple-rank}.  The order of $Q_{i+1}$ gives
\[
 q+a=\log_p|Q_{i+1}|\leq n.
\]
By Lemma~\ref{lem:cohomology-bounds}, $d\leq q$ and
$h^2(Q)\leq\binom{q+1}{2}$.  Hence
\begin{align}
 1+\binom{a+2}{2}+ad+h^2(Q)
 &\leq
 1+\binom{a+2}{2}+aq+\binom{q+1}{2}\notag\\
 &=\binom{a+q+1}{2}+a+2\notag\\
 &\leq\binom{n+2}{2}+1=C_n.
 \label{eq:quadratic-budget}
\end{align}
Combining \eqref{eq:k0-degree}, \eqref{eq:lambda-stage} and
\eqref{eq:quadratic-budget}, we obtain
\[
 \lambda_i
 \geq\binom{a+2}{2}+(a+1)d.
\]

Apply Proposition~\ref{prop:batch} to
\eqref{eq:Frattini-layer}.  It produces a cyclic extension
$k_{i+1}/k_i$ of degree $p$ such that
\[
 \Gal(\Lp(k_{i+1})/k_{i+1})\simeq Q_{i+1}.
\]
The proposition also preserves total reality and unramifiedness at
$p$.  This completes the induction.

At $i=\ell$ we obtain a field $F=k_\ell$ with
\[
 \Gal(\Lp(F)/F)\simeq G,
 \qquad
 [F:\Q]=p^{r+\ell}.
\]
This is \eqref{eq:main-bound}.  Since
\[
 p^{\lceil\log_p C_n\rceil}<pC_n,
\]
we also have
\[
 \tau_p(G)<p^{\ell_\Phi(G)+1}
 \left(\binom{n+2}{2}+1\right).
\]
The Hajir--Maire--Ramakrishna bound
$\tau_p(G)\leq p^2|G|$ gives \eqref{eq:min-bound}.
\end{proof}

\begin{corollary}\label{cor:bounded-length}
Fix an odd prime $p$ and an integer $L$.  If $G$ ranges over finite
$p$-groups with $\ell_\Phi(G)\leq L$, then
\[
 \tau_p(G)\ll_{p,L}(1+\log|G|)^2.
\]
In particular, for every fixed finite $p$-group $H$,
\[
 \tau_p(H^s)\ll_{p,H}s^2.
\]
\end{corollary}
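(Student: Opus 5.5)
The corollary is a direct specialization of Theorem~\ref{thm:main}, so the plan is to read off both statements from \eqref{eq:main-bound} (or its explicit consequence at the end of the proof of Theorem~\ref{thm:main}). Write $|G|=p^n$. The end of that proof gives
\[
 \tau_p(G)<p^{\ell_\Phi(G)+1}\left(\binom{n+2}{2}+1\right).
\]
If $\ell_\Phi(G)\leq L$, then $p^{\ell_\Phi(G)+1}\leq p^{L+1}$. Moreover $\binom{n+2}{2}+1\leq 3(1+n)^2$ for $n\geq0$. Hence
\[
 \tau_p(G)\leq 3p^{L+1}(1+n)^2.
\]
Since $n=\log|G|/\log p$ and $p\geq3$, we have $1+n\leq 1+\log|G|$, which yields $\tau_p(G)\ll_{p,L}(1+\log|G|)^2$. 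The case $G=1$ is excluded in Theorem~\ref{thm:main} (it assumes $p^n>1$), but $\tau_p(1)=1$ via $\Q$, since $p\nmid h(\Q)$, so the bound holds trivially there.

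For the second assertion, I would show that $\ell_\Phi(H^s)=\ell_\Phi(H)$ for every $s\geq1$. The key observation is that the Frattini subgroup of a direct product of finite $p$-groups is the product of the Frattini subgroups. This holds because $(H_1\times H_2)^p=H_1^p\times H_2^p$ and $[H_1\times H_2,H_1\times H_2]=[H_1,H_1]\times[H_2,H_2]$, and since $\PhiG(H)=H^p[H,H]$ is a product of two subgroups, the product of these product subgroups is computed componentwise. Iterating gives $\PhiG^i(H^s)=\PhiG^i(H)^s$, so $\PhiG^i(H^s)=1$ exactly when $\PhiG^i(H)=1$. Hence the Frattini length of $H^s$ is bounded by $L=\ell_\Phi(H)$, a constant depending only on $H$.

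Applying the first part with this $L$ and $|H^s|=|H|^s$ gives
\[
 \tau_p(H^s)\ll_{p,H}(1+s\log|H|)^2\ll_{p,H}s^2
\]
for $s\geq1$.

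There is no real obstacle. The only point needing care is the componentwise description of $H^p$ for the product: the subgroup generated by $p$-th powers of tuples is generated by tuples of $p$-th powers, and it contains each factor's $p$-th power subgroup embedded via tuples with trivial other entries. This is routine.
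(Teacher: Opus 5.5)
Your proposal is correct and follows the paper's proof. You read off the first assertion from Theorem~\ref{thm:main} using $p^{\ell_\Phi(G)}\le p^L$, and for the second you use $\PhiG^i(H^s)=\PhiG^i(H)^s$ (so $\ell_\Phi(H^s)=\ell_\Phi(H)$) together with $\log_p|H^s|=s\log_p|H|$; your explicit constant $3p^{L+1}$ and your separate treatment of $G=1$, which Theorem~\ref{thm:main} excludes via $p^n>1$, are details the paper leaves implicit.
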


\begin{proof}
The first assertion is immediate from Theorem~\ref{thm:main}.  For the
second, use
\[
 \PhiG^i(H^s)=\PhiG^i(H)^s,
\]
so $\ell_\Phi(H^s)=\ell_\Phi(H)$, while
$\log_p|H^s|=s\log_p|H|$.
\end{proof}

\begin{corollary}\label{cor:extraspecial}
If $G$ is an extraspecial $p$-group of exponent $p$, then
\[
 \tau_p(G)\ll_p(1+\log_p|G|)^2.
\]
\end{corollary}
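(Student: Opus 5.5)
The plan is to apply Theorem~\ref{thm:main} directly. The only task is to show that an extraspecial group of exponent $p$ has Frattini length exactly $2$, so that the factor $p^{\ell_\Phi(G)}$ is bounded by a constant depending only on $p$.

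First I would compute $\ell_\Phi(G)$. Let $G$ be extraspecial of order $p^{2m+1}$ and exponent $p$. By definition $[G,G]=Z(G)$ has order $p$, and $G/Z(G)$ is elementary abelian of order $p^{2m}$. Since $G$ has exponent $p$, we have $G^p=1$, hence
\[
 \PhiG^1(G)=G^p[G,G]=[G,G]=Z(G)\simeq C_p .
\]
This subgroup is cyclic of order $p$, so it is abelian of exponent $p$, and therefore $\PhiG^2(G)=\PhiG(Z(G))=1$. Because $G$ is nonabelian, $\PhiG^1(G)\neq1$. Thus $\ell_\Phi(G)=2$.

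Next I would substitute into \eqref{eq:main-bound}. With $n=\log_p|G|=2m+1$, Theorem~\ref{thm:main} gives
\[
 \tau_p(G)\leq p^{2+\lceil\log_p(\binom{n+2}{2}+1)\rceil}
 <p^{3}\Bigl(\tbinom{n+2}{2}+1\Bigr).
\]
The right-hand side is $O_p\bigl((1+n)^2\bigr)=O_p\bigl((1+\log_p|G|)^2\bigr)$. Equivalently, this is the case $L=2$ of Corollary~\ref{cor:bounded-length}.

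There is no real obstacle here. The only point requiring care is the exponent-$p$ hypothesis, which forces $G^p=1$. Without it, for example for extraspecial groups of exponent $p^2$, one still has $\PhiG(G)=Z(G)$, because $G^p\subseteq Z(G)$, so the length is also $2$. The hypothesis therefore merely matches the statement as given.
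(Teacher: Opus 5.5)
Your proof is correct and is the same as the paper's: you show $\Phi(G)=Z(G)\simeq C_p$ and $\Phi^2(G)=1$, so $\ell_\Phi(G)=2$, and then substitute into Theorem~\ref{thm:main} (equivalently, take $L=2$ in Corollary~\ref{cor:bounded-length}). Your side remark is also right: the exponent-$p$ hypothesis is not needed, because $\Phi(G)=Z(G)$ holds for every extraspecial group.
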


\begin{proof}
For such a group, $\PhiG(G)=Z(G)$ has order $p$ and
$\PhiG^2(G)=1$.  Thus $\ell_\Phi(G)=2$.
\end{proof}

\section{Application: elementary abelian
\texorpdfstring{$p$}{p}-groups}\label{sec:elementary}

We finish by proving the sharper elementary abelian estimate and its
optimality.

\begin{theorem}\label{thm:elementary}
Let $p$ be an odd prime and $E_m=(\Z/p\Z)^m$.  Then
\[
 \tau_p(E_m)\asymp_p m^2.
\]
More precisely,
\begin{equation}\label{eq:elementary-upper}
 \tau_p(E_m)
 \leq
 p^{1+\left\lceil
 \log_p\left(1+\binom{m+1}{2}\right)
 \right\rceil}.
\end{equation}
\end{theorem}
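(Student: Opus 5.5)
The proof splits into the upper bound \eqref{eq:elementary-upper}, which is one application of Proposition~\ref{prop:batch}, and a lower bound $\tau_p(E_m)\gg_p m^2$, which comes from an unramified-extension rank inequality.

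\emph{Upper bound.} I would run the proof of Theorem~\ref{thm:main} with a single Frattini layer, since $\ell_\Phi(E_m)=1$. Take $Q_0=1$ and $A_0=E_m$, so that $a=m$ and $\Gamma=E_m$. Let $k_0$ be the field of Lemma~\ref{lem:starting-field} with $r=\lceil\log_p(1+\binom{m+1}{2})\rceil$; then $p\nmid h(k_0)$ and $K=\Lp(k_0)=k_0$. Since $Q=1$, we have $R=\Fp$, $d=h^1(Q)=0$ and $h^2(Q)=0$, and Lemma~\ref{lem:Minkowski-rank} gives $\lambda=p^r-1$.

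The exact condition \eqref{eq:batch-exact-rank} needs $\lambda\ge t-\dim B_Q+h^1(Q,B^\vee)$. Here $t=\binom{m+2}{2}$, $\dim B_Q=b=m+1$ and $h^1=0$, so the condition reads $\lambda\ge\binom{m+2}{2}-(m+1)=\binom{m+1}{2}$. This is exactly $p^r\ge 1+\binom{m+1}{2}$. I must use the exact condition rather than the cruder \eqref{eq:batch-simple-rank} to get the stated constant. Proposition~\ref{prop:batch} then produces a degree-$p$ extension $D/k_0$ with $\Gal(\Lp(D)/D)\simeq E_m$, and $[D:\Q]=p^{1+r}$, which is \eqref{eq:elementary-upper}. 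This bound is $O_p(m^2)$.

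\emph{Lower bound.} This is the main obstacle. Suppose $\Gal(\Lp(F)/F)\simeq E_m$ with $[F:\Q]=N$. Since $G=E_m$ is a finite $p$-group realized as the full tower group, the relation rank must be controlled by units. The standard inequality (Shafarevich, or the Koch form in HMR) is
\[
h^2(G)\le h^1(G)+r_1(F)+r_2(F)-1+\delta,
\]
where $\delta\in\{0,1\}$ depends on whether $\mu_p\subset F$. For $G=E_m$ with $p$ odd, $h^2(E_m)=\binom{m+1}{2}$ and $h^1(E_m)=m$. Hence
\[
\binom{m}{2}\le r_1+r_2\le N,
\]
so $N\gg m^2$, indeed $\tau_p(E_m)\ge\binom m2$. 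The point to check is that this inequality holds for the finite group $\Gal(\Lp(F)/F)$ itself. It does, because $\Gal(\Lp(F)/F)$ is the Galois group of the maximal unramified pro-$p$ extension, to which the Shafarevich bound applies directly; this is also the content of the HMR formula underlying Lemma~\ref{lem:Minkowski-rank}. The free rank $\lambda$ there is nonnegative, so $r_1+r_2-1+h^1-h^2\ge0$ whenever $\mu_p\not\subset F$. In the case $\mu_p\subset F$ the bound changes by at most $1$.

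Combining the two bounds gives $\tau_p(E_m)\asymp_p m^2$.
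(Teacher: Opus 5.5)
Your proposal is correct and follows the paper's proof essentially step for step. The upper bound comes from a single application of Proposition~\ref{prop:batch} with $Q=1$, using the exact rank condition $\lambda=p^r-1\geq\binom{m+1}{2}$, and the lower bound is the Shafarevich--Koch deficiency inequality $\binom m2=h^2(E_m)-h^1(E_m)\leq r_1+r_2-1+\delta\leq[F:\Q]$.
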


\begin{proof}[Proof of Theorem~\ref{thm:elementary}]
Put
\[
 A=E_m,
 \qquad b=m+1,
 \qquad g=\binom b2,
 \qquad
 r=\left\lceil\log_p(g+1)\right\rceil.
\]
Choose $k_0$ as in Lemma~\ref{lem:starting-field}.  Since $k_0$ is
totally real,
\[
 \dim_{\Fp}\cO_{k_0}^\times/\cO_{k_0}^{\times p}
 =p^r-1\geq g.
\]

Apply Proposition~\ref{prop:batch} with $Q=1$ and $\Gamma=A$.  In the
presentation used in its proof,
\[
 t=b+\binom b2,
 \qquad
 d_R(W)=t-\dim_{\Fp}B=g.
\]
Thus the exact rank condition \eqref{eq:batch-exact-rank} is satisfied.
The proposition gives a degree-$p$ extension $D/k_0$ such that
\[
 \Gal(\Lp(D)/D)\simeq E_m.
\]
Consequently,
\[
 \tau_p(E_m)\leq[D:\Q]
 =p^{1+\lceil\log_p(g+1)\rceil},
\]
which is \eqref{eq:elementary-upper}.

For the lower bound, the mod-$p$ cohomology ring of $E_m$ gives
\[
 h^1(E_m)=m,
 \qquad
 h^2(E_m)=m+\binom m2.
\]
Hence
\[
 \operatorname{Def}(E_m)
 =h^2(E_m)-h^1(E_m)=\binom m2.
\]
If $E_m\simeq\Gal(\Lp(F)/F)$, the Shafarevich--Koch relation bound
gives
\[
 \operatorname{Def}(E_m)
 \leq\dim_{\Fp}
 \cO_F^\times/\cO_F^{\times p}
 \leq[F:\Q];
\]
see \cite{HMR-Deficiency}.  Therefore
\[
 \tau_p(E_m)\geq\binom m2.
\]
Together with \eqref{eq:elementary-upper}, this proves
$\tau_p(E_m)\asymp_p m^2$.
\end{proof}
\section*{Use of AI-assisted tools}
ChatGPT (GPT-5.6, OpenAI) was used to assist with the exploration of cohomological arguments and the verification of certain calculations. The author independently verified all mathematical content.

\end{document}